\newtheorem{definition}{\bf{Definition}}
\newtheorem{theorem}{\bf{Theorem}}
\newtheorem{lemma}{\bf{Lemma}}
\newcommand{\rd}{{\mathrm d}}
\newcommand{\rp}{{\mathrm p}}
\newcommand{\rtr}{{\mathrm{tr}}}
\newcommand{\rT}{{\mathrm{T}}}
\newcommand{\vx}{{\bf x}}
\newcommand{\vy}{{\bf y}}
\newcommand{\vz}{{\bf z}}
\newcommand{\vq}{{\bf q}}
\newcommand{\vs}{{\bf  s}}
\newcommand{\vu}{{\bf u}}
\newcommand{\vv}{{\bf  v}}
\newcommand{\vh}{{\bf h}}
\newcommand{\vw}{{\bf w}}
\newcommand{\vF}{{\bf F}}
\newcommand{\vH}{{\bf H}}
\newcommand{\vQ}{{\bf Q}}
\newcommand{\vB}{{\bf B}}
\newcommand{\vP}{{\bf P}}
\newcommand{\vxi}{{\mbox{\boldmath$\xi$}}}
\newcommand{\vlambda}{{\mbox{\boldmath$\lambda$}}}
\newcommand{\vSigma}{{\mbox{\boldmath$\Sigma$}}}
\newcommand{\calH}{{\cal H}}
\newcommand{\calF}{{\cal F}}
\newcommand{\calB}{{\cal B}}
\newcommand{\calV}{{\cal V}}
\newcommand{\calP}{{\cal P}}
\newcommand{\calN}{{\cal N}}
\newcommand{\argmin}{\operatornamewithlimits{argmin}}
\newcommand{\veta}{{\bf \eta}}
\newcommand{\Pb}{\mathbb{P}}
\newcommand{\Rb}{\mathbb{R}}
\newcommand{\Eb}{\mathbb{E}}
\begin{document}

\title{Open-loop Deterministic Density Control of Marked Jump Diffusions}

\author{Kaivalya Bakshi and Evangelos A. Theodorou
\thanks{Kaivalya Bakshi is with the Southwest Research Institute, San Antonio, TX 78227 USA  (email: kaivalya.bakshi@swri.org)}
\thanks{Evangelos A. Theodorou is with the Department of Aerospace Engineering, Georgia Institute of Technology, Atlanta,
	GA 30332 USA}}
	
\maketitle

\begin{abstract}

The standard practice in modeling dynamics and optimal control of a large population, ensemble, multi-agent system represented by it's continuum density, is to model individual decision making using local feedback information. In comparison to a closed-loop optimal control scheme, an open-loop strategy, in which a centralized controller broadcasts identical control signals to the ensemble of agents, mitigates the computational and infrastructure requirements for such systems. This work considers the open-loop, deterministic and optimal control synthesis for the density control of agents governed by marked jump diffusion  stochastic diffusion equations. The density evolves according to a forward-in-time Chapman-Kolmogorov partial integro-differential equation and the necessary optimality conditions are obtained using the infinite dimensional minimum principle (IDMP). We establish the relationship between the IDMP and the dynamic programming principle as well as the IDMP and stochastic dynamic programming for the synthesized controller. Using the linear Feynman-Kac lemma, a sampling-based algorithm to compute the control is presented and demonstrated for agent dynamics with non-affine and nonlinear drift as well as noise terms.
\end{abstract}

\section{Introduction}

Multi-agent systems consisting of large populations of identical agents with continuous time stochastic dynamics are used to model macro-economic systems, swarms in robotics and biology, vehicles in traffic and neuronal activity in  neuroscience. In the optimal control theory, approximating the system by a continuum density of the agents is a practical approach which makes the associated optimal control problem (OCP) mathematically tractable. Research topics treating the control of a distribution of agents to satisfy certain optimality and/or stability criterion fall into the category of density control. The standard idea is to model individual decision making as closed-loop controls by using individual local state-feedback. Since the individual states of agents are stochastic owing to random disturbances, the control actions are stochastic as well. Thus, it is possible to obtain a tractable mathematical model of a large population of agents that interact with each other in a decentralized, non-cooperative and closed-loop control setting. This framework corresponds to the stochastic optimal control theory \cite{YongBook_1958} in the (standard) case wherein the agents do not interact with each other and to the mean field game (MFG) theory \cite{Huang2007} in the (non-standard) case, wherein an agent's dynamics or control actions explicitly depend on the state of the other agents.

However, the assumption of local feedback control either (a) requires individual agents to compute the control locally in time and space (in the decentralized case) or (b) requires the centralized controller to transmit the unique optimal control signal to the respective individual (in the centralized case). The assumption of an open-loop deterministic control implies that the centralized controller can \textit{broadcast} (identical) feedforward control signals or feedback gains to be used by all the agents simultaneously, thus simplifying the loop besides mitigating design complexity and infrastructure requirements of the individual agent (\cite{Brockett2000}, \cite{Milutinovic2013}). Finally, since the application of deterministic state-parameterized gains will in general result in stochastic control actions for each individual although the optimization is posed on deterministic parameters, thus providing implicit feedback.

Further, with the exception of the linear-quadratic-Gaussian (LQG) regime, computation of the local feedback controls even for low dimensional agent dynamics is a non-trivial task. Moreover, if the agent dynamics are nonlinear in the state or controls or are excited by non-Gaussian noise, the numerical solution \textit{curse of dimensionality} \cite{Bellman1964} leads to intractability. Specifically, in the latter case of non-Gaussian excitation, the strategy of local (in space) linear-quadratic approximation \cite{Todorov2005} of the dynamics and cost functions cannot produce a scalable LQG-like algorithm. This is because in the non-Gaussian case, the optimality conditions are represented by the forward-in-time linear Fokker Planck (FP) PDE governing the density evolution and the backward-in-time semilinear Hamilton-Jacobi-Bellman (HJB) PDE governing the value equation which are coupled due to the appearance of the value function in the FP equation. The MFG case is further complicated due to the fully coupled nature of the HJB-FP system (\cite{grover2018mean}, \cite{Bakshi2018TCNS}, \cite{Bakshi2020TAC}). The first \cite{Yannis2016} and second (\cite{Bakshi_ACC2017}, \cite{Theodorou2019RSS}) order forward-backward SDE (FBSDE) \cite{YongBook_1958} framework has been applied to obtain algorithms for optimal control of dynamics with nonlinear drift and state multiplicative noise, but not in the case of control multiplicative Gaussian or the general case of non-Gaussian excitation \cite{Bakshi2015RSS}.

The main contribution of our work is the open-loop deterministic optimal control synthesis for systems with underlying non-Gaussian Q-marked Markov jump diffusion (QMJD) agent dynamics, where the drift and volatility terms are nonlinear in the state and controls. Such processes are used to model ecological population, financial and manufacturing processes (\cite{Pham2005} \cite{Hanson07appliedstochastic}). Prior works (\cite{Milutinovic2011}, \cite{Annunziato2015}, \cite{Berret2020}) on this topic are limited to systems with Gaussian noise. We formulate the associated OCP (section \ref{subsec:Problem_Formulation_Prelminaries}) and apply the infinite dimensional minimum principle (IDMP) to obtain the first order optimality conditions (section \ref{sec:Inf_Dim_Min_Principle}). Prior works applying the minimum principle for the OCP of open-loop deterministic density control were limited to the case of systems with Gaussian noise (\cite{Fattorini1999}, \cite{Krastanov2011}, \cite{Milutinovic2010}, \cite{Sanger11}). We also mention the works (\cite{Nisio2015}, \cite{Swiech2016}) which state a generalized formulation of the IDMP for the considered density control problem but do not obtain the optimality system for the case of Q-MJD processes which are the mainstay of our work.

The second contribution of our work is a sampling-based algorithm (section \ref{sec:SamplingBasedAlgorithms}) which iteratively samples the \textit{uncontrolled} dynamics to compute the control by evaluating the IDMP co-state over the state space. This approach has the key advantage of being able to benefit from \textit{na\"ive parallelization} \cite{Theodorou2011IPI} compared to the schemes in \cite{Annunziato2015} and \cite{Berret2020} which cannot be parallelized. The algorithm is demonstrated for density control of agents with nonlinear dynamics and non-Gaussian excitation by optimizing feedforward as well as state dependent (linear) control gains.

The third contribution of our work is to show the relationship between the IDMP and Dynamic Programming Priciple (DPP) applicable to the open-loop OCP of the density of Q-MJD processes (section \ref{sec:Relationship}). The relationship between the MP and DPP has attracted a lot of interested in the past and is well known in the deterministic case \cite{ConnectionStochasticZhou1991}. In the case of individual stochastic agents, the relationship of stochastic MP with DPP has been explored using the FBSDE formalism (\cite{Connection_Zhou1990}, \cite{ConnectionStochasticZhou1991}, \cite{ma1999forward}, \cite{Pham2014}) for systems with Gaussian and non-Gaussian (\cite{Framstad2004}. \cite{Shi2010}, \cite{Deshpande2014}, \cite{delong2013backward}) excitation. The IDMP-DPP relationship associated with the topic of the density control problem considered in this work has been addressed for the less general case of agents excited by Gaussian noise in the previous work \cite{Annunziato2015}. Our presentation is more general since it addressed the case of agents obeying Q-MJD processes. Additionally we comment on the relationship of the IDMP with stochastic dynamic programming.

\section{Problem Formulation}
\label{sec:Preliminaries}

In this section we provide assumptions and conditions related to existence and uniqueness of solutions to a general class of controlled QMJD processes. Two theorems describing the forward and backward Chapman-Kolmogorov PIDEs corresponding to evolution of the PDF representing QMJD processes are stated. The proofs of these theorems are given in the appendices.

\subsection{Definitions}
\label{subsec:Problem_Formulation_Prelminaries}

Let $(\Omega, \calB, \{\calB_t\}_{t \geq 0},\Pb)$ be a filtered probability space and $(\vx_t)_{t \geq 0}$ a process which is progressively measurable with respect to it. We follow \cite{Oksendal2009} and define this process over $\Rb^{n_x}$ described by
\begin{align}
\rd \vx_t =& \vF(t,\vx_t,\vu(t)) \rd t + \vB(t,\vx_t,\vu(t)) \rd \vw_t + \vH(t,\vx_t,\vQ) \rd \vP(t,\vx_t; t, \vQ) \notag \\
=& \vF(t,\vx_t,\vu(t)) \rd t + \vB(t,\vx_t,\vu(t)) \rd \vw_t + \int_{D_\vQ} \vH(t,\vx_t,\vq) \calP(t, \vx_t; \rd t,\rd \vq), \label{QMJDP}
\end{align}
\noindent where $\vx_t \in \Rb^{n_x}$, $\vu(t) \in D_\vu \subseteq \Rb^{n_u}$, $\vw_t \in \Rb^{n_w}$, $\vQ \in D_\vQ \subset \Rb^{n_p}$, $\vP \in \Rb^{n_p}$, $\vF:[0,T] \times \Rb^{n_x} \times \Rb^{n_u} \rightarrow \Rb^{n_x}$,  $\vB:[0,T] \times \Rb^{n_x} \times \Rb^{n_u} \rightarrow \Rb^{n_x \times n_w}$, $\vH:[0,T] \times \Rb^{n_x} \times D_Q  \rightarrow \Rb^{n_x \times n_p}$. The process $\vw_t$ is the standard Brownian motion. We denote by $\vH(t,\vx_t,\vq) = \vH(t,\vx_{t^-},\vq)$ in the above expressions so that $\Pi(t,\vx_t) = \vH(t,\vx_t,\vQ) \rd \vP(t,\vx_t; t, \vQ) = \int\limits_{0}^{t}\int\limits_{D_\vQ} \vH(t,\vx_t,\vq) \calP(t, \vx_t; \rd t,\rd \vq)$ under the zero-one law, is the doubly stochastic Poisson process \cite{Hanson2007}. Processes $\vx_t$, $\vw_t$, $\vP_t$ and functions on these processes are adapted to the considered filtration such that there exists a unique solution to this SDE given $\vx_0 = \vz \in \Rb^{n_x}$. The conditions for the existence and uniqueness of the solutions are provided in this subsection. Writing in matrix notation, vectors $\calP = [\calP_j]$ and $\vP = [P_j]$ are such that $\{\calP_j\}_{1 \leq j \leq n_p}$ are independent Poisson random measures and $\vQ$ is the mark vector with marks $\{Q_j\}_{1 \leq j \leq n_p}$ such that $Q_j \in D_{Q_j} \subset \Rb$, are independently distributed random variables independent of $P_j$. In this notation realizations of the mark random vector $\vQ$ in the Poisson random measure formulation are denoted by $\vq$. The advantage of this notation is that the mark vector has a deterministic representation. We notice that the processes $P_j$ conditioned on $\vx_t = \vx$ are Poisson distributed. We now assume that there exists the mark density function $\rp_{Q_j}$ corresponding to the mean measure $\nu_j$ of the Poisson random measure $\calP_j$ so that $\Eb[\calP_{j \omega}(t,\vx_t; \rd t,\rd q_j)|\vx_t = \vx] = \nu_j(\rd q_j) = \rp_{Q_j}(t, q_j; t, \vx) \lambda_j(t, q_j; t, \vx) \rd q_j \rd t $ where $\lambda_j \in \Rb$ is called the jump rate for the doubly stochastic Poisson process $P_j$. Since $\int_{D_{Q_j}} \rp_{Q_j}(t, q_j; t, \vx) \rd q_j = 1$ it is observed, writing in matrix vector notation, that
\small
\begin{align}
&\Eb[\rd \vP(t, \vQ; t, \vx_t)|\vx_t = \vx] = [\Eb[\rd P_j(t, Q_j; t, \vx_t)]|\vx_t = \vx] = \notag \\
& [\Eb[\int\limits_{D_{Q_j}} \calP_j(\rd t, \rd q_j; t, \vx_t)|\vx_t = \vx]] = [\Eb_{\rp_{Q_j}}[\lambda_j(t, Q_j; t,\vx) \rd t]]. \notag
\end{align}
\normalsize
Consequently for the case of mark independent jump rates $\lambda_j$ we would have $\Eb[\rd \vP(t,\vx_t; t, \vQ)|\vx_t = \vx] = [\lambda_j(t,\vx) \rd t] = \vlambda(t,\vx) \rd t$ which recovers the same result as in the simple Markov jump diffusions process. We denote by $\vh_{j}:[0,T] \times \Rb^{n_x} \times D_{Q_j} \rightarrow \Rb^{n_x}$ the $j^{th}$ column vector of the matrix $\vH(t,\vx_t,\vQ) = [h_{i,j}(t,\vx_t,Q_j)]$ as well as $\vSigma = \vB\vB^\rT$ and assume $h_{i,j}(t,\vx_t,\vQ) = \vh_{i,j}(t,\vx_t,Q_j)$. The process $\vx_t$ is referred to as the state variable and $\calV[t,T] \ni \vu:[t,T] \rightarrow \Rb^{n_u} $ as the control variable where $\calV[t,T] \triangleq \{ \vu(s) \in D_\vu| t \leq s \}$ fora ll $t \in [0,T]$ is the class of optimal controls. Here the symbol $\triangleq$ implies definition. We comment on this class of controls later in section \ref{sec:Preliminaries.problemstatement}. 

Let us make the following assumptions for the above defined controlled stochastic process:
\vspace{-3mm}
\begin{itemize}
	\item [(S1)] there exists a constant $C_1<\infty$ such that for all $t \in [0,T]$, for all $\vx \in \Rb^{n_x}, \vu \in D_\vu$
	\begin{align}
	&||\vB(t,\vx,\vu)||^2 + |\vF(t,\vx,\vu)|^2 + \sum\limits_{k = 1}^{n_p} \int_{D_{Q_j}} |\vh_{j}(t,\vx)|^2 \nu_j(\rd q_j) \leq C_1(1 + |\vx|^2 + |\vu|^2) \notag
	\end{align}
	\item [(S2)] there exists a constant $C_2 < \infty$ such that for all $t \in [0,T]$,  for all $\vx , \hat{\vx}\in \Rb^{n_x} \; \text{and} \; \vu, \hat{\vu} \in \Rb^{n_u}$
	\begin{align}
	&||\vB(t,\vx,\vu) - \vB(t,\hat{\vx},\hat{\vu})||^2 + |\vF(t,\vx,\vu) - \vF(t,\hat{\vx},\hat{\vu})|^2 \notag \\
	& + \sum\limits_{j = 1}^{n_p} \int_{D_{Q_j}} |\vh_{j}(t,\vx,q_j) - \vh_{j}(t,\hat{\vx},q_j)|^2 \nu_j(\rd q_j) \leq C_2(|\vx - \hat{\vx}|^2 + |\vu - \hat{\vu}|^2) \notag
	\end{align}
	
	\item [(S3)] $F_i(t,\vx,\vu)$ is once continuously differentiable w.r.t. $\vx$ for all $i$
	\item [(S4)] $\Sigma_{ij}(t,\vx,\vu)$ is twice continuously differentiable w.r.t. $\vx$ for all $i,j$
	\item [(S5)]  $\vh_{j}:[0,T] \times \Rb^{n_x} \times D_{Q_j} \rightarrow \Rb^{n_x}$ is a bijection from $\Rb^{n_x}$ to $\Rb^{n_x}$, for all $t \in [0,T]$, for all $q_j \in D_{Q_j}$ and $\vh_{j}(t,\vx,q_j) = \veta_{j}(t,\vxi_j,q_j)$, $I - \veta_{j \vxi_j}(t, \vxi_j, q_j) \neq \mathbf{0}$ for all $(t,\vxi_j)$ where $\vxi_j = \vx + \vh_{j}(t,\vx, q_j)$
	\item [(S6)] $F_i(t,\vx,\vu)$, $\Sigma_{ij}(t,\vx,\vu)$ is once continuously differentiable w.r.t. $\vu \in D_\vu$ for all $i$
\end{itemize}
\vspace{-3mm}
Under the assumptions (S1), (S2), it is well known (pp 10, theorem 1.19) \cite{Oksendal2009} that \eqref{QMJDP} admits a unique c\'adl\'ag adapted integrable solution $\vx_t$ for all $\in [0,T]$ given $\vx_0 = \vz \in \Rb^{n_x}$. The controlled stochastic process represented by the SDE \eqref{QMJDP} is variously called as the marked, compound or doubly stochastic jump diffusion process. Assumptions (S3) through (S5) are typical differentiability assumptions for the existence of the forward and backward Chapman-Kolmogorov operators and corresponding PIDEs.
\begin{definition} \textit{Assuming (S5) we define the forward Chapman-Kolmogorov operator that corresponds to QMJD process \eqref{QMJDP}, denoted by $\calF^\vu_{\text{MJD}}(\cdot)$ acting on a function $(\cdot):[0,T] \times \mathbb{R}^{n_x} \rightarrow \mathbb{R}$, given the control $\vu$, when it exists, as}
	\begin{align}\label{Forward_CKOperator}
	\calF^\vu_{\text{\tiny MJD}} \; (\cdot)(t,\vx)  =& \sum\limits_{i = 1}^{n_x} -\frac{\partial}{\partial x_i} ((\cdot)(t,\vx) F_i(t,\vx,\vu)) + \frac{1}{2}  \sum\limits_{i,j = 1}^{n_x}\frac{\partial^2}{\partial x_i  \partial x_j} ( \Sigma_{ij}(t,\vx,\vu) (\cdot)(t,\vx) ) \notag \\
	&+ \sum\limits_{j = 1}^{n_p} \int\limits_{D_{Q_j}} \bigg( (\cdot)(t,\vx - \veta_{j}(t,\vx,q_j))  |I - \veta_{j \vx}(t, \vx, q_j)| - (\cdot)(t,\vx) \bigg)\rp_{Q_j} \lambda_j (t, \vx; t, q_j) \rd q_j.
	\end{align}
	\end{definition}

\begin{definition} \textit{We define the backward Chapman-Kolmogorov operator that corresponds to QMJD process \eqref{QMJDP}, denoted by $\mathcal{F}^{\dagger \; \vu}_{\text{\tiny QMJD}}(\cdot)$ acting on a function $(\cdot):[0,T] \times \mathbb{R}^{n_x} \rightarrow \mathbb{R}$, given the control $\vu$, when it exists, as}
	\begin{align} \label{Backward_CKOperator}
	\mathcal{F}^{\dagger \; \vu}_{\text{\tiny MJD}} (\cdot)(t,\vx) =& \sum\limits_{i= 1}^{n_x}  F_i(t,\vx,\vu) \frac{\partial (\cdot)}{\partial x_i} + \frac{1}{2}  \sum\limits_{i,j = 1}^{n_x} [\vSigma(t,\vx,\vu)]_{ij} \frac{\partial^2 (\cdot)}{\partial x_i \partial x_j} + \rd_\text{jump} (\cdot),
	\end{align}
	\noindent \textit{ where} $\text{Jump}_j (\cdot)(t,\vx,q_j) \triangleq (\cdot)(t, \vx + \vh_{i,j}(t,\vx,q_j)) - (\cdot)(t,\vx)$ \textit{and}
	\begin{align}
	\rd_\text{jump} (\cdot)(t,\vx) \triangleq \sum\limits_{j = 1}^{n_p} \int\limits_{D_{Q_j}} \text{Jump}_j (\cdot)(t,\vx, q_j) \rp_{Q_j} \lambda_j (t, q_j; t, \vx) \rd q_j. \notag
	\end{align}
\end{definition}

\subsection{Forward and Backward Chapman-Komogorov PIDEs}


We do not discuss the existence and uniqueness of solutions to the forward Chapman-Kolmogorov PIDE here. Instead we refer the interested reader to \cite{Garroni1992} for properties of continuous solutions to the forward Chapman-Kolmogorov PIDEs. Instead we follow the approach of \cite{Bect2010} which only derives the forward PIDE that the PDF should satisfy given that the PDF exists and has certain smoothness. Derivations of the forward Chapman-Kolmogorov PIDE is a well explored topic for the case of spontaneous and forced jumps \cite{Bect2010}, \cite{Gardiner}, \cite{Hespanha2005}. An explicit derivation in case of simple jump diffusions, called the differential Chapman-Kolmogorov PIDE can be seen in (pp 51, equation 3.4.22) \cite{Gardiner}.

In this work we follow the approach of Hanson \cite{Hanson07appliedstochastic}. Here we give complete proof of the second part of the result stated (pp 203-204, Theorem 7.7) \cite{Hanson07appliedstochastic} in theorem \ref{Thm1} below. This is the multidimensional version of the more detailed one dimensional result (pp 199-202, Theorem 7.5) \cite{Hanson07appliedstochastic}. Theorem \ref{Thm1} states the additional smoothness conditions for the PDF besides (S1), (S2), (S3), (S4), (S5) under which the forward Chapman-Kolmogorov equation is satisfied by the PDF of multidimensional QMJD processes \eqref{QMJDP}. Theorem \ref{Thm2} shows how the backward and forward Chapman-Kolmogorov operators are formal adjoints of each other under certain conditions. Proofs are presented in the appendix \ref{Appendix} for completeness. Define $\mathcal{L}^2$ inner product of functions $f_1,f_2$
{
\begin{equation}\label{eq:inner_product}
 \bigg<f_1,f_2 \bigg>  =  \int f_{1}(\vx) f_{2}(\vx) \rd \vx.
\end{equation}
}
\begin{theorem} \label{Thm1}
	\noindent \textit{Consider the QMJD process in \eqref{QMJDP} such that assumptions (S1), (S2), (S3) and (S4) in section \ref{subsec:Problem_Formulation_Prelminaries} are true. Let there exist $\rp(t,\vx|\tau, \vy_\tau)$, the transition probability density of the state $\vx_t$ for all $t \in [0,T]$ written in short as $\rp(t,\vx)$. If (S5) is true,
		\begin{itemize}
			\item[(T1)] there exists $v:\Rb^{n_x} \rightarrow \Rb$  a bounded arbitrary test function which is twice differentiable w.r.t. $\vx$ such that for $\vu \in D_\vu$ the conjunct below vanishes
			\begin{align}
			\sum\limits_{i,j = 1}^{n_x}  \int\limits_{\Rb^{n_x}} \frac{\partial }{\partial x_i} \Bigg( F_i(t,\vx,\vu) \rp(t,\vx) v(\vx) &- \frac{1}{2} \frac{\partial}{\partial x_j} \Big( \Sigma_{ij} (t,\vx,\vu) \rp(t,\vx) \Big) v(\vx) \notag \\ 
			&\qquad\qquad+ \frac{1}{2} \Sigma_{ij}(t,\vx,\vu) \rp (t,\vx)
			\frac{\partial}{\partial x_j} v(\vx) \Bigg) \rd\vx = 0, \notag \\
			\end{align}
			\item[(T2)] $\rp(t,\vx)$ is once continuously differentiable w.r.t. $t$, $(\rp F_i)(t,\vx,\vu)$ is once continuously differentiable w.r.t. $\vx$ for all $i$,  $(\Sigma_{ij}\rp)(t,\vx,\vu)$ is twice continuously differentiable w.r.t. $\vx$ for all $i,j$, $\vu \in D_\vu$,
		\end{itemize}
		\noindent then $\rp(t,\vx)$ satisfies the forward Chapman-Kolmogorov PIDE}
	\begin{align}\label{Forward_CKPDE}
	\frac{\partial \rp(t,\vx)}{\partial t} = \mathcal{F}^{\vu}_\text{\tiny MJD} \; \rp(t,\vx).
	\end{align}
	\noindent \textit{in the weak sense. Further, given $\vx_{t_0} = \vx_0$, the PDF $\rp(t_0,\vx)$ satisfies the delta function initial condition}
	\begin{equation} \label{PDF_canbe_delta}
	\lim\limits_{t \downarrow t_0}\rp(t,\vx) = \delta(\vx - \vx_0).
	\end{equation}
\end{theorem}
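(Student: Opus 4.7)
The plan is to prove \eqref{Forward_CKPDE} in the weak sense by testing against the class of bounded, twice-differentiable functions $v$ from (T1), and to obtain \eqref{PDF_canbe_delta} directly from the determinism of $\vx_{t_0}$. The skeleton is the standard derivation of a Fokker--Planck-type identity: apply It\^o's formula to a smooth functional of $\vx_t$, take expectation so that the martingale pieces drop out, then shift the operator off of $v$ by integration by parts on the drift and diffusion terms and by a Jacobian change of variable on the jump terms, so that what remains acts on $\rp$.

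Concretely, I would first apply the It\^o formula for Q-marked jump diffusions (cf.\ Theorem 7.5 of \cite{Hanson07appliedstochastic}) to $v(\vx_t)$. Since $v$ carries no explicit $t$-dependence, this produces
\begin{align}
v(\vx_t) = v(\vx_{t_0}) + \int_{t_0}^{t} \calF^{\dagger\,\vu}_{\text{\tiny MJD}} v(\vx_s)\, \rd s + \mathcal{M}_t, \notag
\end{align}
where $\mathcal{M}_t$ is the sum of the It\^o integral against $\vw_t$ and the integral against the compensated Poisson random measure; under (S1)--(S2) together with boundedness of $v$ and its derivatives, both are true $L^2$-martingales with vanishing expectation. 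Taking $\Eb[\,\cdot\mid \vx_{t_0}=\vx_0]$ and re-expressing the expectations through the transition density yields
\begin{align}
\int \rp(t,\vx)\, v(\vx)\, \rd\vx = v(\vx_0) + \int_{t_0}^{t} \int \rp(s,\vx)\, \bigl[\calF^{\dagger\,\vu}_{\text{\tiny MJD}} v\bigr](\vx)\, \rd\vx\, \rd s, \notag
\end{align}
and differentiating in $t$, which is justified by the regularity of $\rp$ assumed in (T2), gives the pre-weak-form identity $\bigl\langle v,\partial_t \rp(t,\cdot)\bigr\rangle = \bigl\langle \calF^{\dagger\,\vu}_{\text{\tiny MJD}} v,\rp(t,\cdot)\bigr\rangle$.

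Next I would move $\calF^{\dagger\,\vu}_{\text{\tiny MJD}}$ off of $v$ onto $\rp$, term by term. For the drift part $F_i\partial_{x_i} v$, one integration by parts produces $-\partial_{x_i}(F_i\rp)\,v$ plus a surface term, and for the diffusion part $\tfrac{1}{2}\Sigma_{ij}\partial_{x_i x_j}^2 v$, two integrations by parts produce $\tfrac{1}{2}\partial_{x_i x_j}^2(\Sigma_{ij}\rp)\,v$ plus two surface terms. The aggregate of these boundary contributions is exactly the conjunct displayed in (T1) and therefore vanishes by hypothesis. For each nonlocal contribution
\begin{align}
\int \rp(t,\vx)\bigl(v(\vx+\vh_j(t,\vx,q_j)) - v(\vx)\bigr)\, \rd\vx, \notag
\end{align}
I would perform the substitution $\vxi_j = \vx + \vh_j(t,\vx,q_j)$, whose global inverse $\vx = \vxi_j - \veta_j(t,\vxi_j,q_j)$ and non-singular Jacobian $|I-\veta_{j\vxi_j}|$ are precisely guaranteed by (S5); this reproduces the term $v(\vx)\bigl[\rp(t,\vx-\veta_j)|I-\veta_{j\vx}| - \rp(t,\vx)\bigr]$ appearing in the jump part of $\calF^{\vu}_{\text{\tiny MJD}}\rp$. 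Collecting all pieces and invoking Fubini to merge the $s$-, $q_j$- and $\vx$-integrals leaves $\langle v,\partial_t \rp - \calF^{\vu}_{\text{\tiny MJD}}\rp\rangle = 0$, and arbitrariness of $v$ in the admissible test class yields \eqref{Forward_CKPDE} weakly. The initial condition \eqref{PDF_canbe_delta} is then immediate from $\vx_{t_0}=\vx_0$ being deterministic and the definition of the transition density.

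The main obstacle I anticipate is the careful bookkeeping of the jump-integral change of variables: verifying that the pushforward of $\rp(t,\vx)\,\rd\vx$ under $\vx\mapsto\vxi_j$ equals $\rp(t,\vxi_j-\veta_j)|I-\veta_{j\vxi_j}|\,\rd\vxi_j$ using only the bijectivity and non-vanishing Jacobian of (S5), and that Fubini applies so the $q_j$-, $s$- and $\vx$-integrals can be interchanged freely. Ensuring that the compensated-Poisson component of $\mathcal{M}_t$ is a genuine (not merely local) $L^2$-martingale, so that its expectation vanishes unambiguously, is the other delicate point and relies on combining the quadratic growth bound of (S1) with the mean-measure integrability of $|\vh_j|^2$ against $\nu_j$ furnished by that same assumption.
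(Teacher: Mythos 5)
Your proposal follows essentially the same route as the paper's own proof: the paper likewise starts from the (multidimensional) Dynkin formula for $\Eb[v(\vx_t)\mid\vx_{t_0}=\vx]$ — which is exactly the expectation of the It\^o expansion you write down — then performs the same two integrations by parts on the drift and diffusion terms (with the boundary aggregate identified as the conjunct in (T1)), the same (S5)-based change of variables $\vxi_j=\vx+\vh_j$ on the jump integral, and the same arbitrary-test-function argument to conclude the weak form. Your handling of the delta initial condition is slightly more terse than the paper's (which explicitly notes that $\Pb(\rd\vP_t=0)=e^{-\lambda\,\rd t}\to 1$ rules out an instantaneous jump), but the argument is correct and equivalent.
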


\begin{theorem} \label{Thm2}
	\textit{Consider the QMJD process  in  (\ref{QMJDP}) such that assumptions (S1) through (S5) in subsection \ref{subsec:Problem_Formulation_Prelminaries} are true. We assume that conditions (T1) and (T2) stated in theorem \ref{Thm1} are true.} 
	\textit{If}
	\begin{itemize}
		\item[(T3)] \textit{$\pi:[0,T] \times \Rb^{n_x} \rightarrow \Rb$ is a bounded function which is twice differentiable w.r.t. $\vx$ and once continuously differentiable w.r.t. $t$ such that $\forall \vu \in D_\vu$
			\begin{align}
			\sum\limits_{i,j = 1}^{n_x}  \int\limits_{\Rb^{n_x}} \frac{\partial }{\partial x_i} \Bigg( F_i(t,\vx,\vu) \rp(t,\vx) \pi(t, \vx) &- \frac{1}{2} \frac{\partial}{\partial x_j} \Big( \Sigma_{ij} (t,\vx,\vu) \rp(t,\vx) \Big) \pi(t,\vx) \notag \\ 
			&\qquad + \frac{1}{2} \Sigma_{ij}(t,\vx,\vu) \rp (t,\vx)
			\frac{\partial}{\partial x_j} \pi(t,\vx) \Bigg) \rd\vx = 0, \notag \\
			\end{align}
			\item[(T4)] the left and right hand sides of equation \eqref{4.2} are bounded,}
	\end{itemize}
	\textit{then for all $\vu \in D_\vu$ $\pi(t,\vx)$ satisfies the Green's identity \cite{Lanczos1961} for all $\vu \in D_\vu$}
	\begin{equation}
	\bigg< \pi(t,\vx), \mathcal{F}^{\vu}_\text{\tiny MJD} \rp(t,\vx) \bigg> =  \bigg< \rp(t,\vx),\mathcal{F}^{\dagger \; \vu}_\text{\tiny MJD} \pi (t,\vx) \bigg>. \label{4.2}
	\end{equation}
	\textit{We then call $\pi$ the adjoint function to the PDF $\rp$ so that $\mathcal{F}^{\dagger \; \vu}_\text{\tiny MJD} (\cdot)$ is the adjoint operator of $\mathcal{F}^\vu_\text{\tiny MJD} (\cdot)$}.
\end{theorem}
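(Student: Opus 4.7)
My plan is to split the forward operator $\calF^\vu_\text{MJD}$ into its drift, diffusion, and jump pieces, verify the adjoint pairing with the corresponding pieces of $\calF^{\dagger\,\vu}_\text{MJD}$ separately under the $\mathcal{L}^2$ inner product \eqref{eq:inner_product}, and then invoke the boundedness postulate (T4) to recombine these three identities into Green's identity \eqref{4.2}.

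First I would treat the drift and diffusion terms together. Forming the difference $\bigl<\pi, \calF^\vu_\text{MJD} \rp\bigr> - \bigl<\rp, \calF^{\dagger\,\vu}_\text{MJD}\pi\bigr>$ restricted to these two parts and regrouping by the Leibniz rule, the drift contribution collapses to the total divergence $-\sum_i \partial_{x_i}(F_i\,\rp\,\pi)$, while the diffusion contribution, after two applications of the product rule and cancellation of the symmetric first-order cross terms enabled by $\Sigma_{ij} = \Sigma_{ji}$ (since $\vSigma = \vB\vB^\rT$), collapses to the total divergence $\tfrac{1}{2}\sum_{i,j}\partial_{x_i}\!\left[\pi\,\partial_{x_j}(\Sigma_{ij}\rp) - \Sigma_{ij}\,\rp\,\partial_{x_j}\pi\right]$. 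Combining and integrating over $\Rb^{n_x}$ yields exactly the quantity postulated to vanish in assumption (T3), so the drift-diffusion contributions to the two inner products coincide.

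Next I would dispatch the jump terms. The ``loss'' pieces $-\int\!\pi\,\rp\sum_j\!\int\!\rp_{Q_j}\lambda_j\,\rd q_j\,\rd\vx$ appearing in both inner products are literally identical, so it suffices to match the ``gain'' pieces
$$\int_{\Rb^{n_x}}\!\pi(t,\vx)\sum_{j}\!\int_{D_{Q_j}}\!\rp\bigl(t,\vx - \veta_j(t,\vx,q_j)\bigr)\,\bigl|I - \veta_{j\vx}(t,\vx,q_j)\bigr|\,\rp_{Q_j}\lambda_j\,\rd q_j\,\rd\vx$$
against $\int\!\rp(t,\vx)\sum_j\!\int\!\pi\bigl(t,\vx + \vh_j(t,\vx,q_j)\bigr)\rp_{Q_j}\lambda_j\,\rd q_j\,\rd\vx$. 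For each fixed $j$ and $q_j$, assumption (S5) provides a global diffeomorphism on $\Rb^{n_x}$, so I would change variables $\vz = \vx - \veta_j(t,\vx,q_j)$ in the inner integrand. The determinantal factor $|I - \veta_{j\vx}|$ precisely absorbs the identical factor sitting in front of $\rp$, and the inverse map is $\vx = \vz + \vh_j(t,\vz,q_j)$ by the relation $\vh_j(t,\vz,q_j) = \veta_j(t,\vz + \vh_j,q_j)$ in (S5); after relabeling $\vz\mapsto\vx$, this produces the right-hand side gain integrand exactly.

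The main obstacle I anticipate is the bookkeeping in the drift-diffusion step --- specifically, identifying the total-divergence form that matches (T3) requires careful use of the symmetry of $\vSigma$ to cancel the mismatched first-order cross terms arising from $\pi\,\partial_{x_i}\partial_{x_j}(\Sigma_{ij}\rp)$ versus $\rp\,\Sigma_{ij}\partial_{x_i}\partial_{x_j}\pi$; without this cancellation the remaining integrand would not be a pure divergence and (T3) would not suffice. The jump change of variables, by contrast, is essentially automatic once globality of the bijection in (S5) is in hand, and (T4) serves only at the very end to license summing the three bounded component identities to recover \eqref{4.2}.
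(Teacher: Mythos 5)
Your proposal is correct and follows essentially the same route as the paper: integrate by parts (the paper's ``liberation'') on the drift and diffusion terms so that the leftover total-divergence integrand is exactly the conjunct annihilated by (T3), and handle the jump ``gain'' term by the change of variables $\vxi_j = \vx + \vh_j(t,\vx,q_j)$ guaranteed globally invertible by (S5), with (T4) supplying the boundedness needed to integrate and recombine. Your explicit remark about using the symmetry of $\vSigma$ (or equivalently relabeling the summation indices) to put both first-order cross terms into $\partial_{x_i}$-divergence form is a detail the paper glosses over but is consistent with its stated identity.
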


\subsection{Problem Statement}
\label{sec:Preliminaries.problemstatement}

Using the inner product definition \eqref{eq:inner_product}, we may define the cost functional, when it exists, as
\vspace{-3mm}
\begin{align}\label{CostFunctional}
J \left(t;\rp,\vu\right) = \Phi(T;\rp(T,\vx)) +  \int\limits_{t}^{T} \mathcal{L}(s,\vu(s); \rp(s,\vx)) \; \rd t,
\end{align}
where $\Phi(T;\rp(T,\vx))$ is called the expected terminal cost functional,  $\mathcal{L}(t,\vu(t);\rp(t,\vx))$ is called the expected running cost functional and $t \in [0,T)$. Define
\begin{align}
\Phi(T; \rp(T,\vx))  = \bigg< \phi(T,\vx),\rp(T,\vx) \bigg> \; \text{and} \; \mathcal{L}(s, \vu(s); \rp(s,\vx)) = \bigg<\ell (s,\vx,\vu(s)),\rp(s,\vx) \bigg>,
\end{align}
\noindent wherein $ \phi: [0,T] \times \Rb^{n_x} \to \Rb$ is called the terminal cost  function and $\ell:[0,T] \times \Rb^{n_x} \times \Rb^{n_{u}} \to \Rb$ the running cost function. The choice of these functions as well as the class of admissible control functions $\calV[t,T]$ is restricted such that the $\phi$ and $\ell$ are integrable at all times $t \in [0,T]$. The infinite dimensional optimal control problem for the QMJD processes \eqref{QMJDP} is then stated as
\begin{align}\label{Problem}
& \underset{\vu \in \calV[t,T]}{\text{min}} J \left(t;\rp,\vu\right),
\end{align}
\noindent subject  to the dynamics 
\begin{equation} 
\frac{\partial \rp(s,\vx)}{\partial s} = \mathcal{F}^{\vu(s)}_\text{\tiny MJD} \rp(s,\vx), \; \rp(t,\vx) = \rp_0(\vx). \label{5.3}
\end{equation}
\noindent  Henceforth the stochastic control problem \eqref{Problem} subject to the dynamics \eqref{5.3}, is referred to simply as problem \eqref{Problem}.
Put in words, the problem undertaken is to find a \textit{deterministic open loop} optimal control for all $s \in [t,T]$ to minimize the cost \eqref{CostFunctional} over $[t,T]$ given the PDE dynamics for $\rp(s,\vx)$. {The solution is a \textit{broadcast} controller for all the stochastic ensembles governed by \eqref{QMJDP} which solves the problem \eqref{Problem}}.

\section{Infinite Dimensional Minimum Principle for Q-marked Jump Diffusions}
\label{sec:Inf_Dim_Min_Principle}

This section contains a detailed application of the IDMP as applied to the SOC problem \ref{Problem}. First a Hamiltonian functional for the IDMP is defined. We then derive the Euler-Lagrange equations representing the necessary optimality conditions for the formulated problem. We conclude by defining an infinite dimensional optimal control for PIDE control of QMJD processes.
\begin{definition}
	\textit{We define the Hamiltonian functional for the infinite dimensional MP given the control $\vu$, when it exists, by}
	\begin{equation}
	\calH\big(s, \vu; \rp(s,\vx), \pi(s,\vx) \big)= \bigg<\ell(s,\vx, \vu), \rp (s,\vx)\bigg> +  \bigg< \mathcal{F}^{\dagger \; \vu}_\text{\tiny MJD} \pi(s,\vx),  \rp(s,\vx) \bigg>, \label{Hamiltonian_IDMP}
	\end{equation}      	
	\noindent \textit{where $\pi:[0,T] \times \Rb^{n_x} \rightarrow \Rb$ is called the IDMP costate function, $\ell$ is the running cost function and $\rp$ is the probability density function representing the MJD process \eqref{QMJDP}.}
\end{definition}
\begin{theorem} (\textbf{Infinite Dimensional Minimum Principle}) \label{Thm3}
	\textit{Consider the Markov Jump Diffusion Process  in  \eqref{QMJDP} such that assumptions (S1), (S2), (S3), (S4), (S5), (S6) in section \ref{subsec:Problem_Formulation_Prelminaries} are true. We assume that conditions (T1) and (T2) stated in the theorem \ref{Thm1} hold true. 
	and  that {the running cost} $\ell$ is once continuously differentiable w.r.t. $\vu$.}
	{\textit{Furthermore we assume that there exists a function $\pi$ called the IDMP costate function such that the Hamiltonian functional for the infinite dimensional MP, $\calH\big(s;\vu(s), \rp(s,\vx), \pi(s,\vx) \big)$, exists for this choice and is Frechet differentiable w.r.t. $\vu$. If the IDMP costate function $\pi$ satisfies conditions (T3) and (T4) stated in theorem \ref{Thm2} then the necessary conditions for optimality on the domain $[t,T]$ for the infinite dimensional optimal control problem \eqref{Problem} subject to the dynamics of the forward Chapman-Kolmogorov PIDE \eqref{5.3}, are the Euler-Lagrange equations and terminal condition:}
		\begin{align}
		&\calH_{\vu}(s,\vu(s);\rp(s,\vx),\pi(s,\vx)) = 0 \label{Euler_Lagrange_1} \\
		&-\frac{\partial \pi(s,\vx)}{\partial s} = \ell(s,\vx, \vu(s)) + \mathcal{F}^{\dagger \; \vu(s)}_{\text{\tiny MJD}} \pi(s,\vx) \label{Euler_Lagrange_2} \\
		&\pi(T,\vx) = \phi(T,\vx). \label{Euler_Lagrange_3}
		\end{align}
		\textit{Further the Frechet derivative of the Hamiltonian functional $\calH$ w.r.t. $\vu$ can be specified as}
		\begin{align}
		&\calH_\vu(s,\vu(s);\rp(s,\vx),\pi(s,\vx)) \notag \\
		&= \bigg<\ell_\vu(s,\vx,\vu) + \sum\limits_{i = 1}^{n_x}\frac{\partial F_i}{\partial \vu}(s,\vx,\vu(t)) \frac{\partial \pi(s,\vx)}{\partial x_i} + \frac{1}{2}\sum\limits_{i,j = 1}^{n_x}\frac{\partial}{\partial \vu} \bigg( \frac{\partial^2  (\Sigma(s,\vx,\vu(s))_{ij}\pi(s,\vx))}{\partial x_i \partial x_j} \bigg),\rp(s,\vx) \bigg>. \label{Hamlitonian_IDMP_Control_Gradient}
		\end{align}}
\end{theorem}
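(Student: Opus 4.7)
The plan is to derive the Euler--Lagrange conditions via the standard Lagrange multiplier method adapted to the infinite dimensional constrained problem \eqref{Problem}--\eqref{5.3}. First I would form the augmented cost functional by adjoining the forward Chapman--Kolmogorov constraint with $\pi$ serving as the Lagrange multiplier,
\begin{align}
\calJ(\rp,\vu,\pi) = \bigg< \phi(T,\vx), \rp(T,\vx) \bigg> + \int_t^T \Bigg[ \bigg< \ell(s,\vx,\vu(s)), \rp(s,\vx) \bigg> + \bigg< \pi(s,\vx), \calF^{\vu(s)}_{\text{\tiny MJD}} \rp(s,\vx) - \frac{\partial \rp(s,\vx)}{\partial s} \bigg> \Bigg] \rd s. \notag
\end{align}
Because \eqref{5.3} holds along any admissible trajectory, $\calJ \equiv J$ for every choice of $\pi$, so minimizing $J$ under the PIDE constraint is equivalent to requiring stationarity of $\calJ$ in all three of its arguments.

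Next I would apply Theorem \ref{Thm2} (whose hypotheses (T3)--(T4) are assumed) to replace $\bigg<\pi,\calF^{\vu}_{\text{\tiny MJD}}\rp\bigg>$ by $\bigg<\rp,\calF^{\dagger \vu}_{\text{\tiny MJD}}\pi\bigg>$, and then integrate by parts in time on $\bigg<\pi,\partial_s \rp\bigg>$ to transfer the time derivative onto $\pi$. This produces boundary contributions $\bigg<\pi(T,\vx),\rp(T,\vx)\bigg>$ and $\bigg<\pi(t,\vx),\rp_0(\vx)\bigg>$, and rewrites $\calJ$ in the decoupled form
\begin{align}
\calJ = \bigg<\phi(T,\vx) - \pi(T,\vx), \rp(T,\vx)\bigg> + \bigg<\pi(t,\vx),\rp_0(\vx)\bigg> + \int_t^T \bigg<\ell + \calF^{\dagger \vu(s)}_{\text{\tiny MJD}}\pi + \tfrac{\partial \pi}{\partial s}, \rp(s,\vx)\bigg> \rd s, \notag
\end{align}
in which variations with respect to $\rp$ and $\vu$ separate cleanly.

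I would then take Frechet derivatives one argument at a time. Varying $\rp(T,\vx)$, whose value is free, immediately yields the terminal condition \eqref{Euler_Lagrange_3}. Varying $\rp(s,\vx)$ on $(t,T)$, with $\rp(t,\vx)=\rp_0$ fixed, gives the backward Chapman--Kolmogorov equation \eqref{Euler_Lagrange_2}. Varying $\vu(s)$ using the assumed Frechet differentiability of $\calH$ in $\vu$ produces the stationarity condition \eqref{Euler_Lagrange_1}. To obtain the explicit expression \eqref{Hamlitonian_IDMP_Control_Gradient}, I would differentiate the backward operator \eqref{Backward_CKOperator} term by term in $\vu$, using (S6) for differentiability of $F_i$ and $\Sigma_{ij}$; the jump portion $\rd_\text{jump}(\cdot)$ contributes nothing because $\vh_j$, $\lambda_j$ and $\rp_{Q_j}$ are independent of $\vu$ in the assumed model.

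The main obstacle will be the careful bookkeeping for the nonlocal jump contributions when invoking the adjoint identity and when justifying that the boundary conjuncts in the integration by parts vanish. Assumption (S5), in particular the bijectivity of $\vh_j$ and the invertibility of $I-\veta_{j\vxi_j}$, is precisely what supplies the Jacobian change of variables under which the jump portion of $\calF^{\vu}_{\text{\tiny MJD}}$ is formally adjoint to that of $\calF^{\dagger \vu}_{\text{\tiny MJD}}$; the conjunct condition (T3) plus the boundedness stipulation (T4) control the remaining integration-by-parts remainders in the drift and diffusion portions. Modulo these regularity checks (which were effectively bundled into the hypotheses by design), the derivation reduces to a direct but lengthy calculation.
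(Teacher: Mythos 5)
Your proposal is correct and follows essentially the same route as the paper: adjoin the forward Chapman--Kolmogorov constraint with $\pi$ as a Lagrange multiplier, invoke the adjoint identity of Theorem \ref{Thm2} to pass to $\calF^{\dagger\,\vu}_{\text{\tiny MJD}}\pi$, integrate by parts in time (using that $\delta\rp$ vanishes at $s=t$ since $\rp_0$ is fixed), and read off the three Euler--Lagrange conditions from independent variations in $\rp(T,\cdot)$, $\rp(s,\cdot)$ and $\vu(s)$, with the explicit gradient \eqref{Hamlitonian_IDMP_Control_Gradient} obtained by term-by-term differentiation of the backward operator and the observation that the jump part carries no $\vu$-dependence. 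The only cosmetic difference is that you write the fully decoupled form of the augmented functional before varying, whereas the paper expands $J^\star(\rp+\delta\rp,\vu+\delta\vu)$ and collects first-order terms, but the content is identical.
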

\begin{proof}	
	\normalfont
	Slight abuse of notation is employed in this proof by neglecting to write function arguments for brevity.  In the spirit of applying the infinite dimensional minimum principle we append the dynamics into the cost by introducing the Lagrange multiplier or IDMP costate. We write the auxiliary cost functional
	\vspace{-3mm}
	\begin{align}
	&J^\star\left(t; \rp,\vu \right) = \bigg<\phi(T,\vx),\rp(T,\vx)\bigg> + \int\limits_{t}^{T} \; \bigg[ \bigg<\ell,  \rp\bigg> + 
	\bigg<  \pi, \bigg( \mathcal{F}^{\vu}_{\text{\tiny MJD}} \rp - \frac{\partial \rp}{\partial s} \bigg) \bigg> \bigg] \; \rd s \notag \\  
	=& \bigg<\phi(T,\vx),\rp(T,\vx)\bigg>  + \int\limits_{t}^{T} \bigg[ \; \bigg<\ell,  \rp\bigg>  + \bigg< \rp,\mathcal{F}^{\tiny \dagger \; \vu}_{\text{\tiny MJD}} \pi \bigg> -  \bigg<\pi,\frac{\partial \rp}{\partial s} \bigg> \bigg] \; \rd s, \label{5.8}
	\end{align}	
	\noindent since $\mathcal{F}^{\dagger \; \vu}_{\text{\tiny MJD}}(\cdot)$ satisfies $\big< \pi,\mathcal{F}^\vu_{\text{\tiny MJD}} \rp \big> = \big<\rp,\mathcal{F}^{\dagger \;\vu}_{\text{\tiny MJD}} \pi \big>$ since necessary conditions for theorem \ref{Thm2} are assumed to be true here. We then have
	\begin{align}
	J^\star\left(t; \rp,\vu \right) &= \bigg<\phi(T,\vx),\rp(T,\vx)\bigg>+ \int\limits_{t}^{T} \bigg[ \bigg< \ell + \mathcal{F}^{\dagger \; \vu}_{\text{\tiny MJD}} \pi,\rp\bigg> - \bigg<\pi,\frac{\partial \rp}{\partial s}\bigg> \bigg]  \rd s \notag \\
	&= \bigg<\phi(T,\vx),\rp(T,\vx)\bigg> + \int\limits_{t}^{T} \bigg[ \calH(s,\vu;\rp,\pi)  - \bigg<\pi,\frac{\partial \rp}{\partial s}\bigg> \bigg] \rd s, \label{5.9}
	\end{align}
	\noindent from the definition of Hamiltonian for IDMP \eqref{Hamiltonian_IDMP}. As the Hamiltonian functional is Frechet differentiable,
	\begin{align}
	& J^\star \left(t;\rp + \delta \rp, \vu + \delta \vu \right) = \bigg<\phi(T,\vx),\rp(T,\vx) + \delta \rp(T,\vx)\bigg> \notag \\
	&+ \int\limits_{t}^{T} \; \bigg[ \bigg<\ell + \mathcal{F}^{\dagger \; \vu}_{\text{\tiny MJD}} \pi + (\ell_{\vu} + (\mathcal{F}^{\dagger \; \vu}_{\text{\tiny MJD}} \pi)_{\vu})^\rT \delta \vu,\rp + \delta \rp \bigg> - \bigg<\pi,\frac{\partial }{\partial s} (\rp + \delta \rp)\bigg> \bigg] \; \rd s, \label{5.10}
	\end{align}	
	\noindent on neglecting higher order terms of the variations of $\rp$ and $\vu$ so that
	\vspace{-3mm}
	\begin{align} 
	& J^\star \left(t;\rp + \delta \rp, \vu + \delta \vu \right)= J^\star \left(\rp, \vu \right) + \int\limits_{t}^{T} \; \bigg[ \bigg<\ell + \mathcal{F}^{\dagger \; \vu}_{\text{\tiny MJD}} \pi,\delta \rp\bigg> + \bigg< \bigg(\ell + \mathcal{F}^{\dagger \; \vu}_{\text{\tiny MJD}} \pi \bigg)_{\vu}^\rT \delta \vu,\rp\bigg> \bigg]\; \rd s, \label{5.11}
	\end{align}	
	\noindent on neglecting higher order mixed variational terms of $\rp$ and $\vu$.  Therefore the first order variation of the auxiliary cost functional $J^\star$ with respect to the functions $\rp(s,\vx)$, $\vu(s)$ is given by	
	\begin{align}
	&\delta J^\star \left(t; \rp, \vu \right) = \bigg<\phi(T,\vx),\delta \rp(T,\vx)\bigg> + \int\limits_{t}^{T} \bigg[ \; \bigg<\ell + \mathcal{F}^{\dagger \; \vu}_{\text{\tiny MJD}} \pi,\delta\rp\bigg> \nonumber \\
	&+ \bigg<\bigg(\ell_{\vu}+ (\mathcal{F}^{\dagger \; \vu}_{\text{\tiny MJD}} \pi )_{\vu} \bigg)^\rT \; \delta \vu,\rp\bigg>  - \bigg<\pi,\frac{\partial }{\partial s} (\delta \rp)\bigg> \bigg] \; \rd t. \label{5.12}	
	\end{align}	
	\noindent Equations $\eqref{Hamiltonian_IDMP}$, $\eqref{5.8}$ under assumption of Frechet differentiability of the Hamiltonian functional imply	
	\begin{align}
	\bigg< \bigg(\ell + \mathcal{F}^{\dagger \; \vu}_\text{\tiny MJD} \pi\bigg)_{\vu}^\rT \delta \vu,\rp\bigg> &= \bigg<\bigg(\ell + \mathcal{F}^{\dagger \; \vu}_\text{\tiny MJD}\bigg)_{\vu}^\rT,\rp \bigg> \delta \vu(s) = \calH^{\rT}_\vu(s,\vu;\rp,\pi) \; \delta \vu(s). \label{5.13}
	\end{align}	
	\noindent The last term inside the integral in the RHS of $\eqref{5.12}$ can now be integrated by parts over the time $t$. Changing the order of integration is permitted since conditions of Fubini's theorem \cite{ThomasFinney_1996}, namely the relevant continuous differentiability conditions are satisfied. Therefore
	\begin{align}
	&\int\limits_{t}^{T} \int\limits_{\Rb^{n_x}} \pi(s,\vx) \frac{\partial \delta \rp} {\partial t}(s,\vx) \; \rd \vx \; \rd s = \int\limits_{\Rb^{n_x}} \int\limits_{t}^{T} \pi(s,\vx) \frac{\partial \delta \rp}{\partial s} (s,\vx) \; \rd s \; \rd \vx  \notag \\
	&= \bigg<\pi(T,\vx),{\delta\rp(T,\vx)}\bigg> - \bigg<\pi(t,\vx),{\delta \rp(t,\vx)}\bigg>  - \int\limits_{t}^{T} \; \bigg<\frac{\partial \pi(s,\vx)}{\partial s}, \delta \rp (s,\vx) \bigg> \; \rd s, \label{5.14}
	\end{align}	
	\noindent where we note that $\delta \rp(t,\vx)$, $\delta \rp(T,\vx)$ are the values of the first variation functions of $\rp(s,\vx)$ at the fixed time boundaries namely $s = t$ and $s = T$. Here 
	$\delta \rp(t,\vx) = \delta \rp(s,\vx)\Big|_{s = t} - \frac{\partial \rp}{\partial s}(s,\vx) \Big|_{s = t} \delta (t)$
	and
	$\delta \rp(T,\vx) = \delta \rp(s,\vx)\Big|_{s = T} - \frac{\partial \rp}{\partial s}(s,\vx) \Big|_{s = T} \delta (T)$, 
	where $\delta \rp(s,\vx)|_{s}$ denotes the variation of $\rp(s,\vx)$ at time time $s$. We also know that the variation of $\rp$ at $s = t$ is zero or $\delta \rp|_{s = t} = 0$ since the distribution $\rp$ at the boundary $s = t$ or at the initial instant of time is known to be $\rp_0(\vx)$ and that $\delta T = 0$ since we assume a fixed time boundary. Therefore equations $\eqref{5.12}, \eqref{5.13}, \eqref{5.14}$ imply	
	\begin{align}
	&\delta J^\star \left(t; \rp, \vu \right) = \bigg<(\phi(s,\vx)\big|_{T} - \pi(s,\vx)\big|_T),\delta \rp(s,\vx)|_{T}\bigg> \nonumber \\
	&+ \int\limits_{t}^{T} \; \bigg[ \bigg<\ell(s,\vx,\vu) + \mathcal{F}^{\dagger \; \vu(s)}_{\text{\tiny MJD}} \pi(s,\vx) + \frac{\partial }{\partial s}\pi (s,\vx),\delta \rp(s,\vx)\bigg>  \bigg] \; \rd s \notag \\
	&+ \int\limits_{t}^{T} \; \calH^{\rT}_\vu(s,\vu(s);\rp(s,\vx),\pi(s,\vx)) \delta \vu(s) \; \rd s, \label{5.15}
	\end{align}
	\noindent because $\delta \rp(t,\vx) = 0$ as explained earlier in comments on equation \eqref{5.14}. The variations $\delta \rp(s,\vx)|_T$, $\delta \vu(s)$ and $\delta \rp(s,\vx)$ which appear in the above equation are arbitrary and are non zero, so that the three terms above are independent of each other. Therefore, by using the Fundamental Lemma of the Calculus of Variations, with the usual mild conditions \cite{Weinstock1952} and equation \eqref{5.15}, we have that $\delta J^\star \left(t; \rp, \vu \right) = 0$ implies equations \eqref{Euler_Lagrange_1} \eqref{Euler_Lagrange_2}, \eqref{Euler_Lagrange_3}.
	\noindent These are the conditions for minimizing $J\big(t;\rp, \vu\big)$ using $\vu$ subject to the governing dynamics of the Kolmogorov Feller PDE. Explicit formulation of $\calH_\vu$ can be obtained due to partial differentiatiability conditions w.r.t. $\vu$ as given by \eqref{Hamlitonian_IDMP_Control_Gradient}.
	\noindent recalling that $ \rd_{\text{jump}} \pi(\vx,t)$ does not have explicit dependence on $\vu$.	 
\end{proof}
 

\begin{definition}
	Consider the necessary conditions for the solution of the optimal control problem \eqref{Problem}. If there exists an admissible control $\vu^*(t)$ and a corresponding IDMP costate function $\pi^*(t,\vx)$, such that the Euler-Lagrange equations \eqref{Euler_Lagrange_1}, \eqref{Euler_Lagrange_2}, \eqref{Euler_Lagrange_3} are satisfied at time $t$, then they are called an infinite dimensional optimal control policy and the corresponding optimal IDMP costate function at time $t$. The PDF for $\rp^*(t,\vx)$ denotes the the corresponding optimal PDF satisfying the forward Chapman-Kolmogorov PIDE \eqref{Forward_CKPDE} under the optimal control.
\end{definition}


\section{Relationship with Dynamic Programming Principle}
\label{sec:Relationship}

\subsection{Linear Feynman-Kac lemma and the SDP connection}

Under certain conditions, Dynkin's formula \cite{Hanson07appliedstochastic} formula for the backward costate PIDE \eqref{Euler_Lagrange_2} governing the optimal costate function gives
\begin{align}
\pi^*(t,\vx) &= \Eb \Big[\phi(T,\vx_T) + \int_{t}^{T} \ell(s,\vx_s,\vu^*(s)) \rd t |\vx_t = \vx \Big]. \label{FK}
\end{align}
\noindent Applying the iterated expectations property of conditional expectations  (details in Section \ref{sec:SamplingBasedAlgorithms})  we have
\begin{equation}
\pi^*(t,\vx) = \Eb \big[ \ell (t,\vx,\vu_t) \rd t + \pi^*(t + \rd t, \vx_{t + \rd t}) \big|\vx_t = \vx \big]. \label{ModifiedFeynmanKac}
\end{equation}
\noindent This expression of the optimal costate indicates a relationship of this function with the SDP principle \cite{YongBook_1958}. We investigate this relationship in the next subsection. It is well known that SDP can be applied to the stochastic problem \eqref{Problem} only when the initial state is known with probability one. It is natural to then investigate, the relationship between a version of the DPP for non degenerate initial distributions and the IDMP optimality system.  This is the topic of subsection \ref{sec:Relationship.DPPandIDMP}. For brevity we abuse our notation by curtailing the expression of dependent variables whenever necessary in this section.

\subsection{Connection between IDMP and SDP}
\label{sec:Relationship.IDMPandSDP}


\subsubsection{HJB theory for control of Q-marked Jump Diffusion SDEs}
\label{subsec:HJB_PDE_MJDPs}

We recall the Hailton Jacobi Bellman (HJB) control theory for QMJD processes here. The optimality system stated will be referred to in the next subsection to see its similarity with a form of the costate PIDE. Consider the QMJD process in (\ref{QMJDP}). The value function is defined as the optimal cost to go at any point of time. We assume that the running and terminal cost functions are chosen such that they lead to a cost which is integrable at all instants of time. The value function may then be written as
\begin{align}
v(t,\vx) = \underset{\vu}{\min} \; \Eb\left[\phi(T,\vx_T) + \int\limits_{t}^{T} \ell(t,\vx_t,\vu_t) \; \rd t \Bigg|\vx_{t} = \vx\right]. \label{Value_function}
\end{align}
\begin{definition}
	\textit{We define the HJB Hamiltonian operator for the HJB PIDE corresponding to the QMJD process \eqref{QMJDP}, when it exists, by}
	\begin{align}
	\calH^{{\text{\tiny  HJB}}}\big(t, \vx, \vu, v(t,\vx)) \triangleq& \; \ell(t,\vx,\vu) + v_\vx^\rT(t,\vx) \vF(t,\vx,\vu) + \frac{1}{2} \rtr(\vSigma v_{\vx\vx})(t,\vx,\vu) + \rd_{\text{jump}} v(t,\vx) \label{Hamiltonian_HJB}
	\end{align}      	
	\noindent \textit{where $v:[0,T] \times \Rb^{n_x} \rightarrow \Rb$ is the value function and $\ell$ is the running cost function.}
\end{definition} 	
Theorem 6.3 (pp 177) in \cite{Hanson07appliedstochastic} states that if
	(A1) the decomposition rules (pp 172, Rules 6.1) \cite{Hanson07appliedstochastic} hold,
	(A2) there exists a value function $v$ such that $v\in C^{1,2}([0,T]\times \Rb^{n_x})$,
	(A3) there exists an optimal control, called the HJB optimal control, given by
\begin{equation}
\vu_{\text{\tiny HJB}}^*(t,\vx) = \underset{\vu \in D_\vu}{\argmin} \; \calH^{\text{\tiny HJB}}(t,\vx,\vu, v) \; \text{ for all } (t,\vx) \in [0,T]\times \Rb^{n_x},
\end{equation}
then $v$ satisfies HJB equation for QMJD processes for all $(t,\vx) \in [0,T]\times \Rb^{n_x}$
\begin{align}
-v_t(t,\vx) =& \underset{\vu \in D_\vu}{\min} \; \calH^{\text{\tiny \text{HJB}}}(t,\vx,\vu, v(t,\vx))  \label{HJB_QMJD} \\
v(T,\vx) =& \phi(T,\vx). \label{HJB_QMJDterminal}
\end{align}

\subsubsection{Generalized Optimal Costate equation and Relationship with HJB PDE}

{Under the optimal control, Euler-Lagrange equations \eqref{Euler_Lagrange_1}, \eqref{Euler_Lagrange_2}, can be stated in a concise form \eqref{InfDimHJB_Equation}, \eqref{InfDimHJB_Equationterminal}. This result provides the time rate of change of the optimal costate function integrated over the optimal PDF trajectory, assuming that the IDMP optimality conditions are satisfied. We call equation \eqref{InfDimHJB_Equation} with terminal condition \eqref{InfDimHJB_Equationterminal} as the generalized optimal costate equation. A remark at the end of this section describes the similarities and differences in the governing optimality systems obtained using IDMP and SDP under an additional assumption on problem \eqref{Problem}.} 

\begin{lemma} \label{Lemma1}
	Let $\vu^* \in \calV[t,T]$, $\pi^*, \rp^* \in C_c^{1,2}([t,T] \times \Rb^{n_x})$ be the infinite dimensional optimal control, optimal costate function and the corresponding optimal PDF for the problem \eqref{Problem}. If
			\begin{itemize}
				\item[(L1)] there exist unique $\vu^* \in \calV[t,T]$, $\pi^* \in C_c^{1,2}([t,T] \times \Rb^{n_x})$ satisfying the Euler-Lagrange equations \eqref{Euler_Lagrange_1}, \eqref{Euler_Lagrange_2}, \eqref{Euler_Lagrange_3} for all $s \in [t,T]\times\Rb^{n_x}$ and $\rp^* \in C_c^{1,2}([t,T] \times \Rb^{n_x})$ satisfies \eqref{Forward_CKPDE} under $\vu^*:[t,T]$
				\item[(L2)] $\calH(s,\vu;\rp,\pi^*)$ is convex and continuously differentiable w.r.t. $\vu$ on $[t,T] \times D_\vu$ 
			\end{itemize}
	then for all $s \in [t,T]$
	\begin{equation}
		-\left<\frac{\partial \pi^*}{\partial s}(s),\rp^*(s) \right> = \underset{\vu \in D_\vu}{\min}\calH(s,\vu;\rp^*,\pi^*) = \underset{\vu \in D_\vu}{\min} \big< \ell(s,\vu) + \mathcal{F}^{\dagger \; \vu}_\text{\tiny MJD} \; \pi^*(s), \rp^*(s) \big>. \label{InfDimHJB_Equation}
	\end{equation}
	\begin{equation}
		\text{and} \; \big<\pi^*(T),\rp^*(T) \big> = \big<\phi(T),\rp^*(T) \big>. \label{InfDimHJB_Equationterminal}
	\end{equation}
\end{lemma}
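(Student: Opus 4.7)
The plan is to derive equation \eqref{InfDimHJB_Equation} directly from the Euler--Lagrange equations \eqref{Euler_Lagrange_1}, \eqref{Euler_Lagrange_2} of Theorem \ref{Thm3}, using the convexity and $C^1$ assumption on $\calH$ in $\vu$ to upgrade the stationarity condition into a global minimization. The terminal identity \eqref{InfDimHJB_Equationterminal} will follow immediately from the terminal condition \eqref{Euler_Lagrange_3} by taking the inner product with $\rp^*(T,\vx)$, since $\pi^*(T,\vx)=\phi(T,\vx)$ pointwise and $\rp^*$ is an admissible integrand by the smoothness assumption in (L1).

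For the dynamic identity, I would fix $s\in[t,T]$ and start from the optimal costate PIDE \eqref{Euler_Lagrange_2} written under $\vu^*(s)$, namely $-\partial_s\pi^*(s,\vx)=\ell(s,\vx,\vu^*(s))+\mathcal{F}^{\dagger\,\vu^*(s)}_{\text{\tiny MJD}}\pi^*(s,\vx)$. Pairing both sides with $\rp^*(s,\vx)$ in the $\calL^2$ inner product \eqref{eq:inner_product}, and using the Hamiltonian definition \eqref{Hamiltonian_IDMP}, gives
\begin{equation}
-\Big<\tfrac{\partial \pi^*}{\partial s}(s),\rp^*(s)\Big>=\big<\ell(s,\vu^*(s))+\mathcal{F}^{\dagger\,\vu^*(s)}_{\text{\tiny MJD}}\pi^*(s),\rp^*(s)\big>=\calH(s,\vu^*(s);\rp^*(s),\pi^*(s)).\notag
\end{equation}
This already establishes the leftmost equality of \eqref{InfDimHJB_Equation}. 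The integrability of the pairing is guaranteed by the $C^{1,2}_c$ regularity assumed in (L1) together with conditions (T3)--(T4), which were the hypotheses used to justify the adjoint pairing in Theorem \ref{Thm2}.

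It remains to show $\calH(s,\vu^*(s);\rp^*,\pi^*)=\min_{\vu\in D_\vu}\calH(s,\vu;\rp^*,\pi^*)$. Here assumption (L2) is the key tool: since $\calH(s,\cdot;\rp^*(s),\pi^*(s))$ is convex and continuously differentiable on $D_\vu$, any interior stationary point is automatically a global minimizer. The Euler--Lagrange condition \eqref{Euler_Lagrange_1} gives precisely $\calH_\vu(s,\vu^*(s);\rp^*(s),\pi^*(s))=0$, so $\vu^*(s)$ is such a stationary point and hence realizes the minimum of $\calH(s,\cdot;\rp^*,\pi^*)$ over $D_\vu$. Concatenating this with the previous display yields \eqref{InfDimHJB_Equation}.

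The subtlest point, and the place I would be most careful, is the pointwise-in-$s$ convexity argument: convexity of $\calH$ in the value $\vu\in D_\vu$ (as opposed to convexity in the control trajectory) is exactly what is needed to conclude that the variational stationarity \eqref{Euler_Lagrange_1} implies unconstrained pointwise minimization, and this hinges on $\vu(s)$ being a time-indexed deterministic open-loop control rather than a feedback policy. Provided $D_\vu$ is convex (implicit in the usual reading of admissibility) and $\vu^*(s)$ lies in its relative interior so that the Frechet derivative genuinely detects a minimum, the argument closes. If $D_\vu$ has an active boundary, one would have to replace the equality $\calH_\vu=0$ by the corresponding variational inequality, but this is harmless because convexity still forces the stationary direction to be the minimizer. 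With these observations assembled, \eqref{InfDimHJB_Equation}--\eqref{InfDimHJB_Equationterminal} follow for every $s\in[t,T]$, completing the proof.
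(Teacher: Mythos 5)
Your proof is correct and follows essentially the same route as the paper's: pair the costate equation \eqref{Euler_Lagrange_2} under $\vu^*$ with $\rp^*$ in the $\mathcal{L}^2$ inner product to get the leftmost equality, then use (L2) convexity together with the stationarity condition \eqref{Euler_Lagrange_1} to upgrade the critical point to a global minimizer over $D_\vu$, with the terminal identity following directly from \eqref{Euler_Lagrange_3}. Your added care about boundary points of $D_\vu$ and the variational inequality is a refinement the paper omits, but it does not change the argument.
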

\begin{proof}
	Let $\vu^* \in D_\vu$ such that $\calH_\vu(s,\vu^*;\rp,\pi^*) = \mathbf{0}$ where $s \in [0,T]$. Conditions (L1) and (L2) then imply that
	\begin{equation}
		\vu^*(s) = \underset{\vu \in D_\vu}{\min} \calH(s,\vu;\rp^*,\pi^*),
	\end{equation}
	so that we may write
	\begin{align}
		&\calH(s,\vu^*;\rp^*,\pi^*) = \underset{\vu \in D_\vu}{\min} \calH(s,\vu;\rp^*,\pi^*) \notag \\
		&= \underset{\vu \in D_\vu}{\min} \big< \ell(s,\vu) + \mathcal{F}^{\dagger \; \vu}_\text{\tiny MJD} \; \pi^*(s),\rp(s) \big> = \big< \ell(s,\vu^*) + \mathcal{F}^{\dagger \; \vu^*}_\text{\tiny MJD} \; \pi^*(s),\rp^*(s) \big>. \label{OptimalHamiltonian}
	\end{align}
	Therefore the Euler-Lagrange equations \eqref{Euler_Lagrange_2} and \eqref{OptimalHamiltonian} give us
	\begin{align}
		&-\left<\frac{\partial \pi^*}{\partial s}(s),\rp^*(s) \right> = \big< \ell(s,\vu^*) + \mathcal{F}^{\dagger \; \vu^*}_\text{\tiny MJD} \; \pi^*(s),\rp^*(s) \big> \notag \\
		&= \underset{\vu \in D_\vu}{\min} \calH(s,\vu;\rp^*,\pi^*) = \underset{\vu \in D_\vu}{\min} \big< \ell(s,\vu) + \mathcal{F}^{\dagger \; \vu}_\text{\tiny MJD} \; \pi^*(s), \rp^*(s) \big>. 
	\end{align}
	The terminal condition \eqref{InfDimHJB_Equationterminal} is true because the optimal costate satisfies the terminal condition \eqref{Euler_Lagrange_3}.
\end{proof}

\textit{\textbf{Remark:}} \textit{It is well known that application of SDP for the SOC problem \eqref{Problem}, is restricted to the case when the initial distribution is specified $\rp(t,\vx) = \delta(\vx - \vy)$ where $\vy \in \Rb^{n_x}$. Applying this condition for the generalized optimal costate equation would derive
\begin{equation}
- \frac{\partial \pi^*}{\partial s}(t, \vy) = \underset{\vu \in D_\vu}{\min} \calH(t,\vu;\delta(\vx - \vy),\pi^*(t,\vx)).
\end{equation}
From the definitions of the HJB Hamiltonian operator \eqref{Hamiltonian_HJB} and Hamiltonian functional \eqref{Hamiltonian_IDMP}, it is observed that if $\rp(t,\vx) = \delta(\vx - \vy)$ then they are related for all $\vu, v$ at the initial time instant by
\begin{equation}
\calH(t,\vu;\delta(t,\vx - \vy),v) = \calH^{\text{\tiny HJB}}(t,\vy,\vu, v(t,\vy)). \label{HamiltoniansEqual}
\end{equation}
Equations \eqref{HamiltoniansEqual}, \eqref{HJB_QMJD} then imply
\begin{equation}
- \frac{\partial v}{\partial s}(t, \vy) = \underset{\vu \in D_\vu}{\min} \calH(t,\vu;\rp(t,\vx),v). \label{HJB_QMJD_deltaInit}
\end{equation}
We can therefore see that the optimal costate PIDE and HJB PIDE are idential at the initial time instant given $\rp(t,\vx) = \delta(\vx - \vy)$. However, in general, the equations satisfied by the optimal costate \eqref{InfDimHJB_Equation} and value function \eqref{HJB_QMJD} are distinct at all other time instants since Equation  \eqref{HamiltoniansEqual} is true only at the initial time instant. Observe, additionally, that the terminal conditions  \eqref{InfDimHJB_Equationterminal}, \eqref{HJB_QMJDterminal} are not equal because, $\rp(T)$ is in general, not degenerate. The optimal control generated by IDMP, $\vu^*(s) = {\argmin}_{\vu \in D_\vu} \calH(s,\vu;\rp^*,\pi^*)$, is therefore distinct from the one generated by SDP, $\vu_{\text{\tiny HJB}}^*(s,\vy) = {\argmin}_{\vu \in D_\vu} \; \calH^{\text{\tiny pseudo}}(s,\vy,\vu,v(t,\vy))$, at all time instants including the initial instant. In addition, we recall that the IDMP control is an open loop control which depends only implicitly on the PDF at any time instant, while the SDP control is explicitly a closed loop control.}

\subsection{DPP for the Infinite Dimensional Value function and Relationship with IDMP}
\label{sec:Relationship.DPPandIDMP}

Based on Lemma \ref{Lemma1} we now quantitatively establish the relationship between the IDMP and a DPP satisfied by the infinite dimensional value function. First we construct the DPP satisfied by such an infinite dimensional version of the value function for PIDE control of QMJD processes. Then we show how the infinite dimensional value function is related to the optimal costate function along the optimal PDF trajectory. {A precise expression of this relationship has not been stated clear way in preexisting works, as far as known to the authors.} Assuming sufficient smoothness of the costate function and PDF and that IDMP optimality conditions are satisfied, the relationship between IDMP and DPP is stated explicitly in what follows.
\begin{definition} 
	\textit{We define the {infinite dimensional} value function at $t \in [0,T)$ for the problem \eqref{Problem} if it exists and is finite, as}
	\begin{equation} 
		V(t;\rp(t)) = \underset{\vu \in \calV[t,T]}{\min} \; J(t;\rp,\vu) \label{Def:InfDimValueFunction}
	\end{equation}
	\begin{equation}
		\text{and} \; V(T;\rp(T)) = \big<\phi(T), \rp(T) \big>. \label{Def:InfDimValueFunctionTerminal}
	\end{equation}
\end{definition}

\begin{theorem} \label{Thm4}
	Let there exist a unique finite valued value function defined in \eqref{Def:InfDimValueFunction} for the problem \eqref{Problem} under the dynamics \eqref{5.3}. Then for all $s \geq t$
	\begin{equation}
		V(t;\rp(t)) = \underset{\vu \in \calV[t,T]}{\min} \bigg\{ \int\limits_{t}^{s}\big< \ell(\tau,\vu), \rp(\tau) \big> \rd \tau + V(s; \rp(s)) \bigg\} \label{IDBellman}.
	\end{equation}
\end{theorem}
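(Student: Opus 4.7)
The plan is to prove the DPP by a standard two-sided inequality argument, exploiting the fact that the dynamics \eqref{5.3} are deterministic in the density $\rp$ and linear in it, so that the flow satisfies a semigroup property: given any $\vu$ defined on $[t,T]$, the restriction $\vu|_{[s,T]}$ together with the ``initial'' distribution $\rp(s)$ uniquely determines $\rp$ on $[s,T]$. This lets me split the cost functional \eqref{CostFunctional} as
\begin{equation*}
J(t;\rp,\vu) \;=\; \int_t^s \bigl\langle \ell(\tau,\vu(\tau)),\rp(\tau)\bigr\rangle\,\rd\tau \;+\; J(s;\rp,\vu),
\end{equation*}
which is the key algebraic identity underlying both halves of the argument.

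For the inequality $V(t;\rp(t)) \leq \text{RHS}$, I fix any $\vu^{1}\in\calV[t,T]$ producing $\rp^{1}$ on $[t,s]$, fix $\varepsilon>0$, and choose an $\varepsilon$-optimal control $\vu^{2,\varepsilon}\in\calV[s,T]$ for the subproblem with initial density $\rp^{1}(s)$, so that $J(s;\rp^{2,\varepsilon},\vu^{2,\varepsilon}) \leq V(s;\rp^{1}(s))+\varepsilon$. Then I form the concatenation
\begin{equation*}
\bar\vu(\tau) \;=\; \vu^{1}(\tau)\,\mathbf{1}_{[t,s)}(\tau) \;+\; \vu^{2,\varepsilon}(\tau)\,\mathbf{1}_{[s,T]}(\tau),
\end{equation*}
which lies in $\calV[t,T]$ by the pointwise definition of $\calV$ in Section \ref{subsec:Problem_Formulation_Prelminaries}, and the induced density $\bar\rp$ coincides with $\rp^{1}$ on $[t,s]$ and with the flow from $\rp^{1}(s)$ under $\vu^{2,\varepsilon}$ on $[s,T]$. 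Evaluating $J(t;\bar\rp,\bar\vu)$ with the split above and using the $\varepsilon$-optimality of $\vu^{2,\varepsilon}$ yields $V(t;\rp(t)) \leq \int_t^s\langle\ell(\tau,\vu^{1}),\rp^{1}(\tau)\rangle\,\rd\tau + V(s;\rp^{1}(s)) + \varepsilon$; taking the infimum over $\vu^{1}$ and sending $\varepsilon\downarrow 0$ gives the upper bound.

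For the reverse inequality $V(t;\rp(t)) \geq \text{RHS}$, I take any $\vu\in\calV[t,T]$ with induced density $\rp$, apply the split to obtain
\begin{equation*}
J(t;\rp,\vu) \;=\; \int_t^s\bigl\langle\ell(\tau,\vu),\rp(\tau)\bigr\rangle\,\rd\tau \;+\; J(s;\rp,\vu),
\end{equation*}
and then use $J(s;\rp,\vu) \geq V(s;\rp(s))$, which holds directly from the definition \eqref{Def:InfDimValueFunction} since the tail $\vu|_{[s,T]}$ is an admissible control for the subproblem starting from $\rp(s)$. Infimizing the resulting lower bound over $\vu$ gives the claim. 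Finally, combining both inequalities establishes \eqref{IDBellman}.

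The main obstacle I expect is the rigorous justification of the two semigroup facts used implicitly above: first, that the concatenated control $\bar\vu$ remains in $\calV[t,T]$ (a pointwise class, so this is straightforward provided admissibility is time-local); and second, that the solution of the forward Chapman--Kolmogorov PIDE \eqref{5.3} initialized at $\rp^{1}(s)$ with control $\vu^{2,\varepsilon}$ coincides on $[s,T]$ with the solution obtained by running $\bar\vu$ from $\rp(t)=\rp_{0}$. The latter is a uniqueness-of-solutions statement for the weak formulation of \eqref{Forward_CKPDE}; since the theorem hypotheses already grant uniqueness of the value function and Theorem \ref{Thm1} supplies the forward PIDE, I would invoke this uniqueness (under the smoothness class in which $\rp$ is taken) to identify the two flows and close the argument without further regularity work.
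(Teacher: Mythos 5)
Your proposal is correct and follows essentially the same route as the paper: both halves rest on the additive split $J(t;\rp,\vu)=\int_t^s\langle\ell(\tau,\vu),\rp(\tau)\rangle\,\rd\tau+J(s;\rp,\vu)$, an $\varepsilon$-optimal control argument, and concatenation of controls at time $s$. If anything, you are more explicit than the paper about the two semigroup facts (admissibility of the concatenated control and uniqueness of the forward flow from $\rp(s)$) that the paper's proof uses tacitly.
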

\begin{proof}
	We have for all $\epsilon > 0$ there exists $\vu_\epsilon \in \calV[t,T]$ such that
	\begin{align}
		V(t;\rp(t)) + \epsilon \geq J(t;\rp,\vu_\epsilon) = \int\limits_{t}^{s}\big< \ell(\tau,\vu_\epsilon), \rp(\tau) \big> \rd \tau + J(s;\rp,\vu_\epsilon) \geq \int\limits_{t}^{s}\big< \ell(\tau,\vu_\epsilon), \rp(\tau) \big> \rd \tau + V(s;\rp) 	
	\end{align}
	so that the following minimum on the right hand side satisfies the inequality
	\begin{equation}
		 V(t;\rp(t)) + \epsilon \geq \underset{\vu \in \calV[t,T]}{\min} \bigg \{ \int\limits_{t}^{s}\big< \ell(\tau,\vu), \rp(\tau)\big> \rd \tau + V(s;\rp) \bigg \}. \label{1}
	\end{equation}
	Now given $\epsilon > 0$ and $\vu_\epsilon \in \calV[t,T]$, we can choose $\vu(\tau) = \vu_\epsilon(\tau)$ when $\tau \in [t,s]$ such that $V(s;\rp(s)) + \epsilon \geq J(s;\rp,\vu_\epsilon)$. From Equation \eqref{Def:InfDimValueFunction} of Definition 6 it can be seen that for all $\vu_\epsilon \in \calV[t,T]$, $\epsilon > 0$,
	\begin{align}
		V(t;\rp(t)) + \epsilon \leq J(t;\rp,\vu) = \int\limits_{t}^{s}\big< \ell(\tau,\vu), \rp(\tau) \big> \rd \tau + J(s;\vu,\rp) \leq \int\limits_{t}^{s}\big< \ell(\tau,\vu_\epsilon), \rp(\tau) \big> \rd \tau + V(s;\rp(s)) + \epsilon. \label{2}
	\end{align}
	Since this inequality is true for all $\vu_\epsilon \in \calV[t,T]$, we may take the minimum over all such $\vu_\epsilon$ 
	\begin{align}
		V(t;\rp(t)) \leq& J(t;\vu,\rp) \leq \underset{\vu_\epsilon \in \calV[t,T]}{\min} \bigg \{ \int\limits_{t}^{s}\big< \ell(\tau,\vu_\epsilon), \rp(\tau) \big> \rd \tau + V(s;\rp) \bigg \} \notag \\
		\leq& \underset{\vu \in \calV[t,T]}{\min} \bigg \{ \int\limits_{t}^{s}\big< \ell(\tau,\vu), \rp(\tau) \big> \rd \tau + V(s;\rp) \bigg \} + \epsilon. \label{3}
	\end{align}
	where we have replaced the symbol $\vu_\epsilon$ with $\vu$ in the last equality. Combining equations \eqref{1}, \eqref{3} implies
	\begin{equation}
		V(t;\rp(t)) - \epsilon \leq \underset{\vu \in \calV[t,T]}{\min} \bigg \{ \int\limits_{t}^{s}\big< \ell(\tau,\vu), \rp(\tau) \big> \rd \tau + V(s;\rp) \bigg \} \leq V(t;\rp(t)) + \epsilon.
	\end{equation}
	for all $\epsilon > 0$. Under the limit $\epsilon \rightarrow 0$ the desired result is obtained.
\end{proof}

The following theorem quantitatively states the relationship between the infinite dimensional value function and the optimal costate function under the assumption that problem \eqref{Problem} has a unique solution.

\begin{theorem} \label{Thm5}
		Let $\vu^* \in \calV[t,T]$, $\pi^*, \rp^* \in C_c^{1,2}([t,T] \times \Rb^{n_x})$ be the unique infinite dimensional optimal control, optimal costate function and corresponding PDF for the problem \eqref{Problem}. If (L1) and (L2) are true and
		\begin{itemize}
			\item [(T5)] there exists a unique finite valued value function defined in \eqref{Def:InfDimValueFunction} for the problem \eqref{Problem}
		\end{itemize}
		then for all $s \in [t,T]$
		\begin{equation}
			V(s;\rp^*(s)) = \big< \pi^*(s), \rp^*(s) \big>.
		\end{equation}
\end{theorem}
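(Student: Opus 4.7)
The plan is two-fold: first, show by a direct time-integration argument that along the optimal trajectory the pairing $\bigg<\pi^*(s),\rp^*(s)\bigg>$ equals the cost-to-go $J(s;\rp^*,\vu^*)$; second, invoke optimality of $\vu^*$ together with the DPP of Theorem \ref{Thm4} to identify this quantity with the value function $V(s;\rp^*(s))$. The terminal case $s=T$ is immediate from \eqref{Def:InfDimValueFunctionTerminal} and the terminal condition \eqref{Euler_Lagrange_3}, so the content lies in showing the identity for $s\in [t,T)$.

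For the first step, I would differentiate $\bigg<\pi^*(s),\rp^*(s)\bigg>$ with respect to $s$ by the product rule. Substituting the forward Chapman-Kolmogorov PIDE \eqref{5.3} for $\partial_s \rp^* = \mathcal{F}^{\vu^*}_\text{\tiny MJD}\rp^*$ and the Euler-Lagrange costate equation \eqref{Euler_Lagrange_2} for $\partial_s \pi^* = -\ell(s,\vu^*) - \mathcal{F}^{\dagger\;\vu^*}_\text{\tiny MJD}\pi^*$, the two $\mathcal{F}$-terms cancel by Green's identity (Theorem \ref{Thm2}), whose hypotheses (T1)--(T4) are supplied by the conditions (L1)--(L2) inherited from Lemma \ref{Lemma1}. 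This yields the clean relation
\begin{equation}
\frac{d}{ds}\bigg<\pi^*(s),\rp^*(s)\bigg> = -\bigg<\ell(s,\vu^*(s)),\rp^*(s)\bigg>. \notag
\end{equation}
Integrating from $s$ to $T$ and using $\pi^*(T,\vx)=\phi(T,\vx)$ from \eqref{Euler_Lagrange_3} gives
\begin{equation}
\bigg<\pi^*(s),\rp^*(s)\bigg> = \bigg<\phi(T),\rp^*(T)\bigg> + \int_s^T \bigg<\ell(\tau,\vu^*(\tau)),\rp^*(\tau)\bigg>\rd\tau = J(s;\rp^*,\vu^*), \notag
\end{equation}
by the definition of the cost functional \eqref{CostFunctional}.

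For the second step, one needs $J(s;\rp^*,\vu^*) = V(s;\rp^*(s))$. I would argue this by a concatenation/splicing argument: given any admissible $\tilde{\vu}\in\calV[s,T]$ generating a PDF $\tilde{\rp}$ with $\tilde{\rp}(s)=\rp^*(s)$ under \eqref{5.3}, define $\hat{\vu}$ to equal $\vu^*$ on $[t,s)$ and $\tilde{\vu}$ on $[s,T]$; since PDF evolution under \eqref{5.3} is deterministic in $\vu$, the associated PDF trajectory matches $\rp^*$ on $[t,s]$ and $\tilde{\rp}$ on $[s,T]$. Optimality of $\vu^*$ on $[t,T]$ from (L1), (T5) combined with additivity of $J$ in time then forces $J(s;\tilde{\rp},\tilde{\vu})\geq J(s;\rp^*,\vu^*)$, so taking the infimum over $\tilde{\vu}\in\calV[s,T]$ gives $V(s;\rp^*(s)) = J(s;\rp^*,\vu^*)$, which combined with the first step proves the theorem. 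The main obstacle is making this splicing argument rigorous in the measure/density setting, i.e.\ verifying that the concatenated control lies in $\calV[t,T]$ and that the concatenated PDF trajectory is indeed the unique solution of \eqref{5.3} under $\hat{\vu}$ with initial data $\rp_0$; this relies on well-posedness of the forward Chapman-Kolmogorov PIDE and the uniqueness assertion in (L1). The time-derivative calculation in the first step is a standard duality manipulation and should present no difficulty once the required smoothness (L1) is in place.
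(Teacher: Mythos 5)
Your proof is correct, but it reaches the conclusion by a different mechanism than the paper. For the identity $\bigl<\pi^*(s),\rp^*(s)\bigr> = J(s;\rp^*,\vu^*)$, the paper argues probabilistically: it applies Dynkin's formula to $\pi^*$ to obtain the Feynman--Kac representation $\pi^*(s,\vx) = \Eb[\int_s^T \ell\,\rd\tau + \phi(T,\vx_T)\mid \vx_s=\vx]$, then uses iterated expectations and the law of total expectation to derive the recursion $\bigl<\pi^*(s),\rp^*(s)\bigr> = \int_s^{\hat s}\bigl<\ell(\tau,\vu^*),\rp^*(\tau)\bigr>\rd\tau + \bigl<\pi^*(\hat s),\rp^*(\hat s)\bigr>$; you instead differentiate the pairing directly, cancel the generator terms via the Green's identity of Theorem \ref{Thm2}, and integrate from $s$ to $T$. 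The two are duals of one another and yield the same intermediate fact; yours is more self-contained (it uses only machinery already proved in the paper, rather than importing Dynkin's formula for jump diffusions), while the paper's version ties in naturally with the Feynman--Kac representation it needs anyway for the sampling algorithm of Section \ref{sec:SamplingBasedAlgorithms}. For the second half, the paper invokes Theorem \ref{Thm4} at the optimal control to get the matching recursion for $V$, sets $\hat s = T$, and concludes from the common terminal condition that the two recursions coincide; your splicing argument establishes the principle of optimality $V(s;\rp^*(s)) = J(s;\rp^*,\vu^*)$ directly, which is essentially the content hidden in the paper's appeal to uniqueness and ``equations identical.'' Two small points to tighten: (T3)--(T4) are not literally listed in (L1)--(L2), so you should say explicitly that the compact support $\pi^*,\rp^*\in C_c^{1,2}$ is what kills the boundary conjuncts and supplies boundedness; and the admissibility of the concatenated control $\hat\vu$ should be noted as a (mild) assumption on the class $\calV[t,T]$, since the paper leaves that class only loosely specified.
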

\begin{proof}
Dynkin's formula for the jump diffusion process \eqref{QMJDP} can be applied to $\pi^*(s,\vx) \in C_c^{1,2}([t,T] \times \Rb^{n_x})$ which satisfies \eqref{Euler_Lagrange_2}, \eqref{Euler_Lagrange_3} by following Theorem 7.1, chapter 7 in \cite{Hanson07appliedstochastic} to obtain
\begin{equation}
	\pi^*(s,\vx) = \Eb \left[ \int\limits_{s}^{T} \ell(\tau,\vx_\tau,\vu^*(\tau)) \rd \tau + \phi(T,\vx_T) \big| \vx_s = \vx \right]
\end{equation}
where the expectations are under the optimal PDF governed by \eqref{5.3} under the control $\vu^*(\tau) \in \calV[s,T]$ is $\rp^*(\tau):[s,T]$. The law of iterated expectations implies for all $t \leq s \leq \hat{s} \leq T$
\begin{align}
	\pi^*(s,\vx) 
	=& \; \Eb \bigg[ \; \Eb \bigg[ \int\limits_{s}^{\hat{s}} \ell(\tau,\vx_\tau,\vu^*(\tau)) \rd \tau + \int\limits_{\hat{s}}^T \ell(\tau,\vx_\tau,\vu^*(\tau)) \rd \tau + \phi(T,\vx_T) \bigg| \vx_{\hat{s}} \bigg] \; \bigg| \vx_s = \vx \bigg] \notag \\
	=& \; \Eb \bigg[ \int\limits_{s}^{\hat{s}} \ell(\tau,\vx_\tau,\vu^*(\tau)) \rd \tau + \Eb \bigg[ \int\limits_{\hat{s}}^T \ell(\tau,\vx_\tau,\vu^*(\tau)) \rd \tau + \phi(T,\vx_T) \bigg| \vx_{\hat{s}} \bigg] \; \bigg| \vx_s = \vx \bigg] \notag \\
	=& \; \Eb \bigg[ \int\limits_{s}^{\hat{s}}\ell(\tau,\vx_\tau,\vu^*(\tau)) \rd \tau + \pi^*(\hat{s}, \vx_{\hat{s}}) \bigg| \vx_s = \vx \bigg].
\end{align}
Using the law of total expectation we have
\begin{align}
	&\big< \pi^*(s), \rp^*(s) \big> = \Eb \left[ \; \int\limits_{s}^{\hat{s}} \Eb \big[ \ell(\tau,\vx_\tau,\vu^*(\tau)) \rd \tau + \pi^*(\hat{s}, \vx_{\hat{s}}) \big| \vx_s = \vx \big] \right] \notag \\
	&= \Eb \bigg[\int\limits_{s}^{\hat{s}} \ell(\tau,\vx_\tau,\vu^*(\tau)) \rd \tau \bigg] + \Eb \bigg[ \pi^*(\hat{s}, \vx_{\hat{s}}) \bigg] = \int\limits_{s}^{\hat{s}} \big< \ell(\tau, \vu^*(\tau)), \rp({\tau})) \big> \rd \tau + \big< \pi^*(\hat{s}), \rp({\hat{s}}) \big>. \label{4}
\end{align}
Due to the fact that the optimal control $\vu^* \in \calV[t,T]$ solving the problem \eqref{Problem} is unique and (T5), we can write the following result from Theorem \ref{Thm4}. Note again that we denote the optimal PDF $\rp^*(\tau):[s,T]$ evolving under the optimal control $\vu^* \in \calV[s,T]$.
\begin{align}
	V(s;\rp(s)) =& \underset{\vu \in \calV[t,T]}{\min} \left\{ \int\limits_{s}^{\hat{s}}\big< \ell(\tau,\vu), \rp(\tau) \big> \rd \tau + V(\hat{s}; \rp(\hat{s})) \right\} = \int\limits_{s}^{\hat{s}}\big< \ell(\tau,\vu^*), \rp^*(\tau) \big> \rd \tau + V(\hat{s}; \rp^*(\hat{s})) \label{5}
\end{align}
Recall the terminal conditions for the value function \eqref{Def:InfDimValueFunctionTerminal} and for optimal costate function \eqref{Euler_Lagrange_3} implies for all $\rp(T)$ satisfying the conditions (T1) through (T4)
\begin{equation}
V(T;\rp(T)) = \big< \pi^*(T), \rp(T) \big> = \big< \phi(T), \rp(T) \big> \label{6}
\end{equation}
which is true for $\rp^*(T)$ as well. Let us choose $\hat{s} = T$ in equation \eqref{3}, \eqref{4}. Observing that equations \eqref{4}, \eqref{5} are identical and the value function is unique due to (T5), we can prove easily using equation \eqref{6} that $V(s;\rp^*(s)) = \big< \pi^*(s), \rp^*(s) \big>$ along the optimal PDF trajectory $\rp^*(s):[s,T]$ under the control $\vu^* \in \calV[s,T]$.
\end{proof}
Stated in words, we have proved that the infinite dimensional value function is equal to the $\mathcal{L}^2$ product of optimal costate function with the optimal PDF along the optimal trajectory. Note that this relationship was proved using the mechanism of the linear Feynman-Kac lemma, which motivated our investigation.

\section{Sampling based algorithm for PDF Control of QMJD processes}
\label{sec:SamplingBasedAlgorithms}

Using the Feynman-Kac formula \eqref{FK} directly, to compute the costate or optimal costate by forward sampling would be computationally prohibitive. Direct application would require generating samples over the entire time horizon starting from each space time grid point. Instead, we use an iteratively backpropagated costate (IBC) algorithm \cite{Bakshi_CDC2016}, the key ingredient for which is derived below. By Dynkin's formula, Theorem 7.1, chapter 7 of \cite{Hanson07appliedstochastic} for QMJD process \eqref{QMJDP}, applied to $\pi(t,\vx) \in C_c^{1,2}([0,T] \times \Rb^{n_x})$ which satisfies \eqref{Euler_Lagrange_2}, \eqref{Euler_Lagrange_3} under arbitrary control $\vu(s) \in \calV[t,T]$
\begin{equation}
	\pi(t,\vx) = \Eb \left[ \int\limits_{t}^{T} \ell(s,\vx_s,\vu(s)) \rd s + \phi(T,\vx_T) \big| \vx_t = \vx \right].
\end{equation}
\noindent The law of iterated expectations implies
\begin{align}
	\pi(t,\vx) =& \; \Eb \bigg[ \; \Eb \bigg[ \ell(t,\vx_t,\vu(t)) \rd t + \int\limits_{t + \rd t}^{T} \ell(s,\vx_s,\vu(s)) \rd s + \phi(T,\vx_T) \bigg| \vx_{t + \rd t} \bigg] \; \bigg| \vx_t = \vx \bigg] \notag \\
	=& \; \Eb \bigg[ \ell(t,\vx_t,\vu(t)) \bigg]\rd t + \Eb \bigg[ \int\limits_{t + \rd t}^{T} \ell(s,\vx_s,\vu(s)) \rd s + \phi(T,\vx_T) \bigg| \vx_{t + \rd t} \bigg] \; \bigg| \vx_t = \vx \bigg] \notag \\
	=& \; \Eb \bigg[ \ell(t,\vx_t,\vu(t)) \rd t + \pi^*(t + \rd t, \vx_{t + \rd t}) \bigg| \vx_t = \vx \bigg].
\end{align}

We denote the temporal grid indexed as $[t_0,t_N] = [0,T]$. We have dropped the conditional expectation notation for brevity in the following pseudo code and pick a small number $\epsilon > 0$.

%
%
%
%

\label{sec:Examples}
\begin{wrapfigure}{L}{0.5\textwidth}
	\begin{minipage}{0.5\textwidth}
		\begin{algorithm}[H]
			\caption{assignment algorithm}
			\caption{IBC PDF control of MJD processes} \label{Algorithm:2}
			\begin{algorithmic}[1]
				\State \textbf{Initialize} Choose $\vu^0_t:[t_0,t_N]$ arbitrarily.
				\Repeat
				\State \textbf{Initialize} $\pi^k(t_N,\vx) = \phi(t_N,\vx)$.
				\While {$i \neq 0$}
				\State {$\pi^k(t_{i-1},\vx) = \ell(t_{i-1},\vx,\vu^k_{t_{i-1}}) +  \Eb\big[\pi^k(t_{i},\vx + \rd \vx_{t_i})]$}.
				\State $i = i - 1$.
				\EndWhile
				\State Compute $\calH^k_{\vu}(t;\rp^k,\vu^k_t,\mu_\vQ^k)$ on $[t_0,t_N]$ by \eqref{Hamlitonian_IDMP_Control_Gradient}.
				\State Update control: $\vu^{k+1}_t = \vu^k_t - \epsilon \calH_{\vu}^k(t)$.
				\Until{Convergence $|\calH_{\vu}(t)| < \epsilon$.} \\
				\Return $\vu^*_t:[0,T]$.
			\end{algorithmic}
		\end{algorithm}
	\end{minipage}
\end{wrapfigure}


\noindent  \textbf{Example Problem:} We demonstrate our algorithm for open loop control of ensembles with dynamics \eqref{QMJDP} with linear drift term, nonlinear diffusion coefficient and constant jump rate parameter \small
\begin{align} 
\rd x_t =& (- \alpha x_t + u(t)\rd t + \zeta \sqrt{\frac{(\kappa - x_t)^2}{2} + u(t)^2} \rd w_t \notag \\
&+ h(x_t,Q) \rd P_t, \label{7.1}
\end{align}
\normalsize
\noindent where $h(x_t,Q) = 0.5\cdot Q\cdot x_t$ with constant jump rate $\lambda = 1$ and mark density of $\textit{unif}([0,1])$. The initial condition is assumed to be a normal distribution. The state space is specified by the constraints $x(t) \in [-3,3]$ while the control is constrained by $u(t) \in [-3,3]$. Further two obstructions are modeled by the state constraints $x(t) \in [-2,-1] \; \text{at} \; t=1$ and $x(t) \in [0.5,2]  \; \text{at} \; t=2$. The process is defined to terminate on reaching any of the above boundaries. The values of the constants used are $\kappa = 3$, $\alpha = 0.5$, terminal time $T = 3$ and $\zeta = 0.1$. The task is to reach the target $x_{goal}(T) = 0$.

In this example we choose the running and terminal cost functions as $\ell(u) = \frac{R}{2}u^2$, $\phi(x) = Q_f (x - x_{goal})^2$, and a trajectory termination penalty of $\Xi - \tau$ where $\tau \in [0,T]$ is the stopping time of a trajectory colliding with a boundary or obstacle. Let $R = 2\times 10^{-4}$, $Q_f = \frac{4}{9}$, $\zeta = 0.1$ and $\Xi = 7$. The temporal discretization $\{t_i\}_{0 \leq i \leq N}$ is chosen to satisfy $\lambda \cdot (t_i - t_{i-1}) << 1$ the zero one law \cite{Hanson07appliedstochastic} allowing our use of the derived form of the PIDEs. Since we generate an open loop policy, we adopt the strategy of generating an implicity feedback policy. We do this by assuming $u(t) = u_1(t) + xu_2(t)$ and treating $\vu(t) = [u_1(t) \; u_2(t)]^\rT$ as the control we compute. This state parameterized policy results in a lower state dependent cost seen in Subfigure (1d). Trajectories are sampled in two steps in our algorithm. We sample single time step trajectories inside the costate computation loops at each spatio temporal grid point. We need full time horizon samples to compute the control gradient of the Hamiltonian at each control update iteration. Let $x_k(t)$ be the $k^{th}$ sample of trajectories at time $t$ for either case and $\mathbf{1}_k = \mathbf{1}_{\{\Xi_k < T\}}$ which indicates whether the sample was terminated by collision. Theorem \ref{Thm3}, Eq. \eqref{ModifiedFeynmanKac} imply
\begin{align} \label{7.8}
&\pi(x,t) =  \frac{1}{K} \sum\limits_{k = 1}^{K} \left[ \phi(x_k(T)) + \int\limits_{t}^{T} \ell(x_k(s),u_s)\rd s \right](1 - \mathbf{1}_k) + \frac{1}{K} \sum\limits_{k = 1}^{K} \left[ (\Xi_k - \tau) + \int\limits_{t}^{\tau} \ell(x_k(s),u_s) \rd s \right]\mathbf{1}_k, \notag
\end{align}
\vspace{-3mm}
\begin{equation}
\calH_\vu(s,\vu(s);\rp,\pi) = \begin{bmatrix}
R u_1(t) + \pi_x + \xi^2 u_1(t) \pi_{xx} \\ R u_2(t) + \pi_x + \xi^2 u_2(t) \pi_{xx}
\end{bmatrix}.
\end{equation}


\noindent \textbf{Results:} Optimal costate function, a set of optimally controlled trajectories and the cost per iteration depicting convergence for $\rp_Q = \textit{unif}(0,1)$ and $\rp_0 = \calN(-1,\frac{1}{2})$ are illustrated in Subfigures (1a), (1b, (1c). Converged costs are compared in Subfigure (1c) with different initial conditions. The cost is lower for initial condition $\calN(-1,\frac{1}{2})$ since far lower number of optimally controlled trajectories end up colliding with the first obastacle depicted in the optimal trajectory sample in Subfigure (1b). We compare converged costs for simple jump diffusion $Q \sim \delta(1)$ with initial condition $\rp_0 = \delta(0)$, when using the state parameterized policy and non parameterized control in Subfigure (1d). This shows the benefit of the implicit feedback provided by the state parameterized policy.

\begin{figure}[ht]
	\centering
	\subfigure[]{%
		\includegraphics[width =  0.23\textwidth]{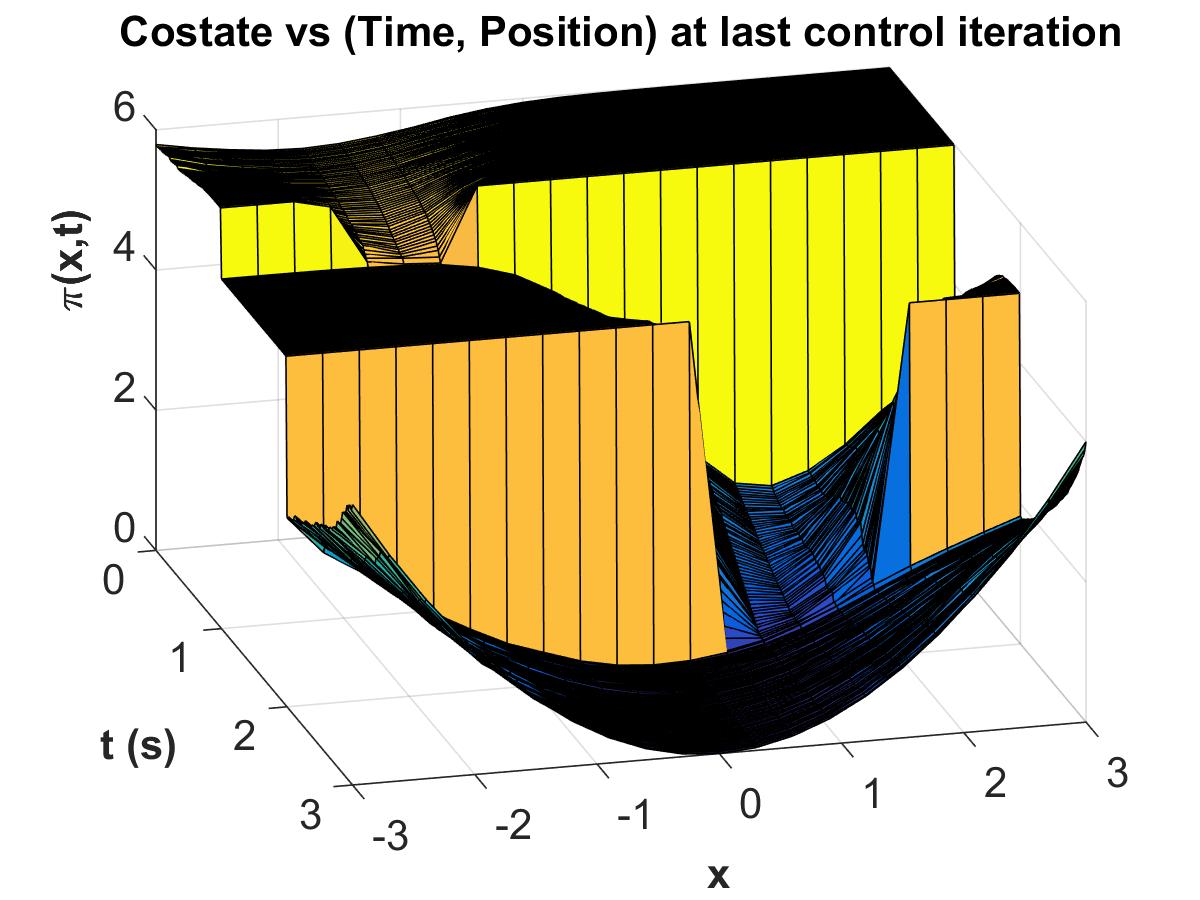}}
	\label{fig:Optimal_costate}
	\subfigure[]{%
		\includegraphics[width =  0.23\textwidth]{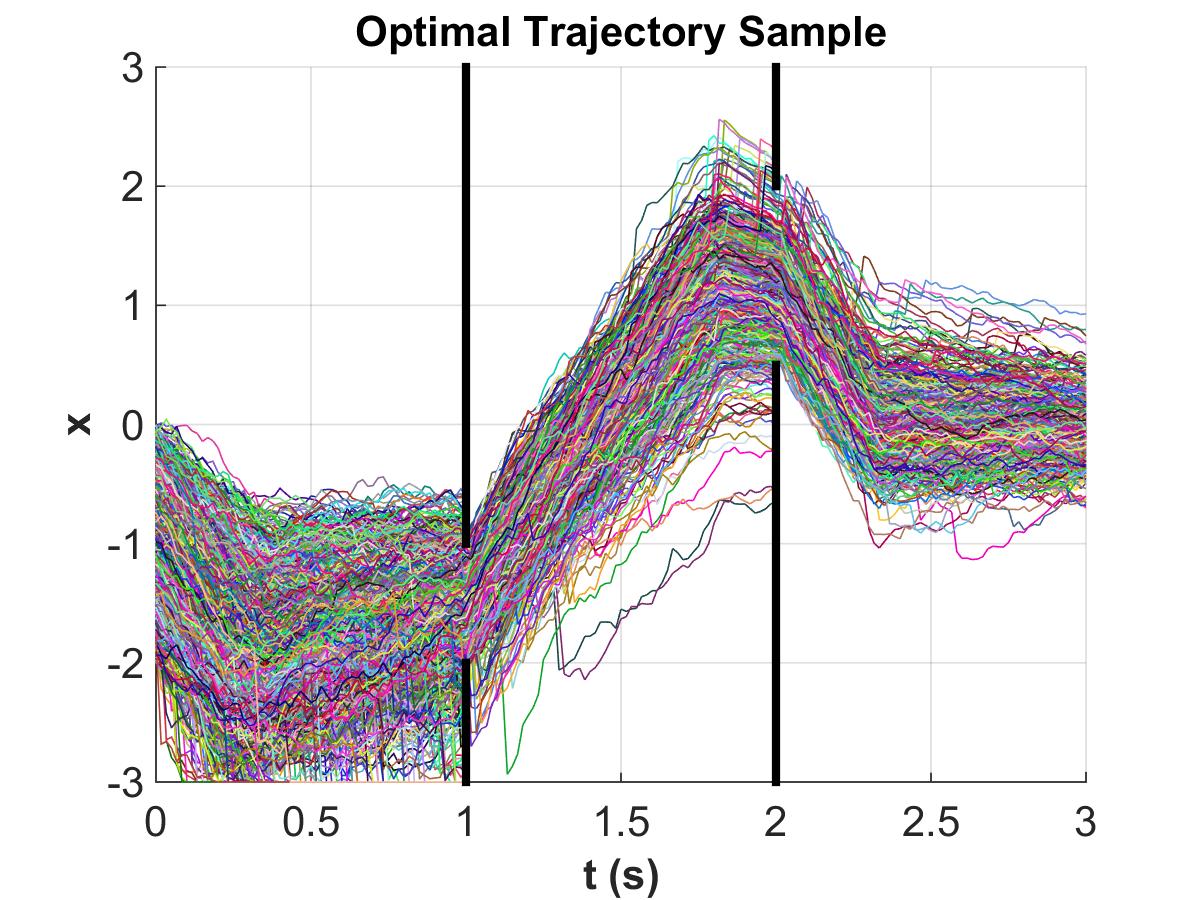}}
	\label{fig:Cost}
	\subfigure[]{%
		\includegraphics[width =  0.23\textwidth]{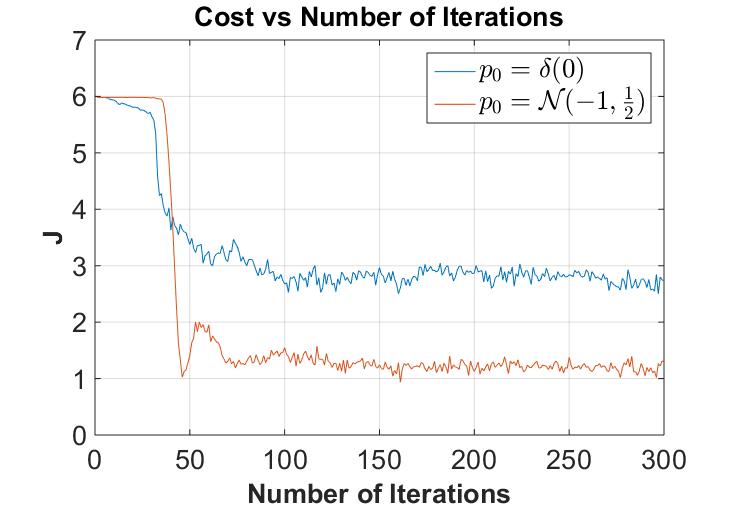}}
	\label{fig:Optimal_Samples}
	\subfigure[]{%
		\includegraphics[width =  0.23\textwidth]{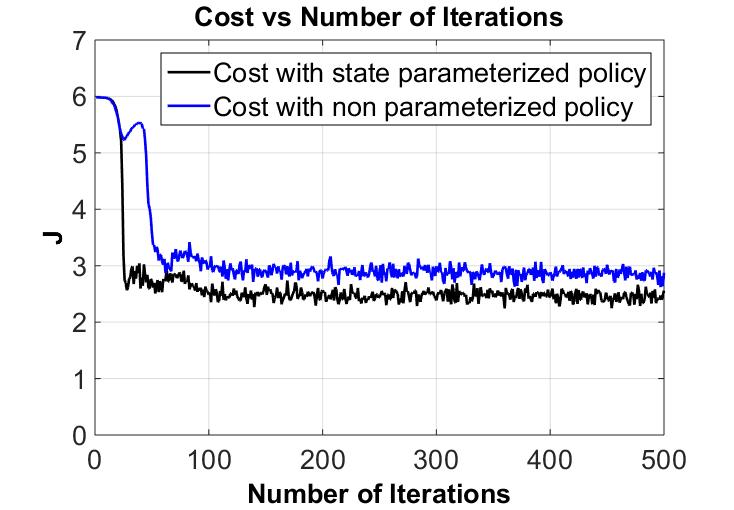}}
	\label{fig:Cost_comparison}
	\caption{(a) Costate at last iteration $\rp_0 = \calN(-1,\frac{1}{2})$  (b)  Optimal trajectory samples for $\rp_0 = \calN(-1,\frac{1}{2})$ (c) Cost vs iterations for $\rp_0 = \calN(-1,\frac{1}{2})$ in blue, and $\rp_0 = \delta(0)$ in red, (d)  Cost comparison with state parameterized policy in black, for $\rp_0 = \delta(0)$, $Q \sim \delta(1)$}
\end{figure}

\section{Conclusions}
\label{sec:Conclusions}

The open-loop deterministic formulation of the OCP results in a forward sampling-based algorithm to calculate the control by evaluating the co-state on the state space. This algorithm has the advantage of being \textit{na\"ively parallelizable} (applicable to density control of Q-MJD processes), an advantage prior algorithms (applicable to density control of diffusions) do not have. The algorithm is demonstrated in controlling a nonlinear stochastic process using feedforward controls as well as (linear) state-parameterized control. We show the IDMP-DPP relationship for the corresponding density control problem.

\section{Appendix}
\label{Appendix}

\begin{proof} \textit{of theorem \ref{Thm1}}
	\normalfont
	\noindent We permit abuse of notation in this proof by neglecting to write argument dependencies o functions for brevity when necessary in this proof. Further we denote partial derivatives as $\frac{\partial f}{\partial \vv} = f_\vv$ if needed. This proof is a stepwise analogous extension to the multidimensional case of the proof for the one dimensional case (pp 199-203, Theorem 7.5) \cite{Hanson07appliedstochastic}. It follows easily by differentiating the well known multidimensional Dynkin's formula (pp 203, Equation 7.32) \cite{Hanson07appliedstochastic} for the function $v$ and using two integration by parts steps to move the spatial derivatives operating on $v$ to $\rp$. Differentiation of the Dynkin's formula for $u(t,\vx) = \Eb[v(\vx_t)|\vx_{t_0} = \vx]$ yields
	\begin{equation}
	\frac{\partial}{\partial t}u(t,\vx) = \Eb\left[\frac{\partial}{\partial t}\int\limits_{t_0}^{t}\mathcal{F}^{\dagger \; \vu}_\text{\tiny MJD} v (\vx_s)\rd \vs|\vx_{t_0} = \vx\right] = \int\limits_{\Rb^{n_x}} \mathcal{F}^{\dagger \; \vu}_\text{\tiny MJD} v (\vx) \rp(t,\vx) \rd \vx \label{PF1E1}
	\end{equation}
	\noindent where $\mathcal{F}^{\dagger \; \vu}_\text{\tiny MJD}$ is the backward operator defined in the previous subsection \ref{subsec:Problem_Formulation_Prelminaries}. Note that $u(t,\vx) = \Eb[v(\vx_t)|\vx_{t_0} = \vx]$ implies that when using the PDF representation of the expectation we have
	\begin{equation}
	\frac{\partial}{\partial t}u(t,\vx) = \int_{\Rb^{n_x}} v(\vx) \frac{\partial}{\partial t}\rp(t,\vx) \rd \vx. \label{PF1E2}
	\end{equation}
	\noindent The Dynkin formula \eqref{PF1E1} and the definition of the backward operator imply
	\begin{align}
	\frac{\partial}{\partial t} u(t,\vx) = \int\limits_{\Rb^{n_x}} \left(\sum\limits_{i = 1}^{n_x} F_i v_{x_i} + \frac{1}{2}\sum\limits_{i,j = 1}^{n_x} \Sigma_{ij} v_{x_j x_i} + \rd_\text{jump} v \right) \rp \rd \vx. \label{PF1E3}
	\end{align}
	\noindent Let us at first focus on the diffusion terms without the jump term in the above expression \eqref{PF1E3}. We use integration by parts for integration w.r.t. $x_i$ in step one, and $x_j$ in step two for terms from the backward operator in \eqref{PF1E1} to move the spatial derivatives to the PDF using condition (T2). Using this idea along with Fubini's theorem \cite{ThomasFinney_1996} we get \small
	\begin{align}
	&\int\limits_{\Rb^{n_x}} \sum\limits_{i = 1}^{n_x} \left( F_i v_{x_i} + \frac{1}{2}\sum\limits_{j = 1}^{n_x} \Sigma_{ij} v_{x_j x_i} \right)\rp \rd \vx = \underbrace{\int\limits_{\Rb} ... \int\limits_{\Rb}}_{(n_x - 1) \text{times}} \Bigg[ \sum\limits_{i = 1}^{n_x} \bigg\{ \int\limits_{\Rb} \bigg(  -\frac{\partial (F_i \rp)}{\partial x_i}\; v - \frac{1}{2} \sum\limits_{j = 1}^{n_x} \frac{\partial (\Sigma_{ij} \rp)}{\partial x_i} \; v_{x_j} \bigg) \rd x_i \notag \\
	&\qquad \qquad \qquad \qquad \qquad \qquad \qquad + \int\limits_{\Rb} \frac{\partial}{\partial x_i} \bigg(F_i \rp v + \frac{1}{2} \sum\limits_{j = 1}^{n_x} \Sigma_{ij} \rp v_{x_j} \bigg) \rd x_i \bigg\} \Bigg]  \notag \rd x_1 ... \rd x_{i-1}\rd x_{i+1} ... \rd x_{n_x} \notag
	\end{align}
	\begin{align}
	&= \int\limits_{\Rb^{n_x}} \sum\limits_{i = 1}^{n_x} -\frac{\partial (F_i \rp)}{\partial x_i} \; v \; \rd \vx + \int\limits_{\Rb^{n_x}} \frac{\partial}{\partial x_i} \bigg(F_i \rp v + \frac{1}{2} \sum\limits_{j = 1}^{n_x} \Sigma_{ij} \rp v_{x_j} \bigg) \rd \vx + \underbrace{\int\limits_{\Rb} ... \int\limits_{\Rb}}_{(n_x - 1) \text{times}} \Bigg[  \frac{1}{2} \sum\limits_{i,j = 1}^{n_x}  \int\limits_{\Rb} \bigg( \frac{\partial^2}{\partial x_i x_j} (\Sigma_{ij} \rp) v \bigg) \rd x_j \notag \\
	&\qquad \qquad \qquad \quad - \frac{1}{2} \sum\limits_{i,j = 1}^{n_x} \int\limits_{\Rb} \bigg(\frac{\partial}{\partial x_j} \bigg( \frac{\partial (\Sigma_{ij} \rp)}{\partial x_i} v \bigg) \bigg) \rd x_j \Bigg]  \notag \rd x_1 ... \rd x_{j-1}\rd x_{j+1} ... \rd x_{n_x} \notag \\
	&= \sum\limits_{i,j = 1}^{n_x} \int\limits_{\Rb^{n_x}} -\frac{\partial (F_i \rp)}{\partial x_i} \; v + \frac{1}{2}  \bigg( \frac{\partial^2 (\Sigma_{ij} \rp)}{\partial x_i x_j} \; v \bigg) \rd \vx + \sum\limits_{i,j = 1}^{n_x} \int\limits_{\Rb^{n_x}} \frac{\partial}{\partial x_i} \bigg(F_i \rp v + \frac{1}{2} \Sigma_{ij} \rp v_{x_j} - \frac{1}{2} \frac{\partial (\Sigma_{ij} \rp)}{\partial x_j} \; v \bigg) \rd \vx, \label{PF1E4}
	\end{align} \normalsize
	\noindent the last part of which can easily be identified as the conjunct in condition (T1). Let us now focus on the jump term in the expression \eqref{PF1E3} which is
	\begin{equation}
	\int\limits_{\Rb^{n_x}}\rd_\text{jump} v(\vx) \rp(t,\vx) \rd \vx = \int\limits_{\Rb^{n_x}} \sum\limits_{j = 1}^{n_p} \int\limits_{D_{Q_j}} \bigg( \Big[v(\vx + \vh_{j}(t,\vx, q_j)) - v(\vx) \Big] \rp_{Q_j} \lambda_j (t, q_j; t, \vx) \rd q_j \bigg) \rp(t,\vx) \rd \vx. \label{PF1E5}
	\end{equation}
	Consider the terms in the first summation on the right hand side of this equation. Change the variable of integration to $\vxi_j$ using the Change of Variables theorem \cite{JeffreysBook_1988} where $\vxi_j = \vx + \vh_{j}(t,\vx, q_j) = \vx + \veta_{j}(t,\vxi_j,q_j)$ in the first step, wherein $\vh_{j}$ is assumed invertible w.r.t. $\vxi_j$. This inverse mapping exists since we assumed $\vh_{j}$ is a bijection from $\Rb^{n_x}$ to $\Rb^{n_x}$. We note that transformed domain of integration is again $\Rb^{n_x}$. In the second step we change the dummy variable of integration back to $\vx$. We therefore have for all $j, \; 1 \leq j \leq n_p$ that
	\begin{align}
	&\int\limits_{\Rb^{n_x}} \int\limits_{D_{Q_j}} \bigg( v(\vx + \vh_{j}(t,\vx,q_j)) (p_{Q_j} \lambda_j)(t,q_j;t,\vx) \rd q_j \bigg) \rp(t,\vx)  \rd \vx \notag \\
	&= \int\limits_{\Rb^{n_x}} \int\limits_{D_{Q_j}} \bigg( v(\vxi_j) (p_{Q_j} \lambda_j)(t,q_j;t,\vxi_j - \veta_j(t,\vxi_j,q_j)) \rp(t,\vxi_j - \veta_j(t,\vxi_j,q_j)) \bigg) \rd q_j  |I - \veta_{j \vxi_j}(t,\vxi_j,q_j)| \rd \vxi_j \notag \\
	&= \int\limits_{\Rb^{n_x}} \int\limits_{D_{Q_j}} \bigg( v(\vx) \rp(t,\vx - \veta_j(t,\vx,q_j)) (p_{Q_j} \lambda_j)(t,q_j;t,\vx - \veta_j(t,\vx,q_j)) \bigg) \rd q_j|I - \veta_{j \vx}(t,\vx,q_j)| \rd \vx. \label{PF1E6}
	\end{align}
	\noindent Equations \eqref{PF1E3}, \eqref{PF1E4}, \eqref{PF1E5}, \eqref{PF1E6} and (T1) then imply
	\begin{align}
	&\frac{\partial}{\partial t} u(t,\vx) = \int\limits_{\Rb^{n_x}} \big(\sum\limits_{i = 1}^{n_x} F_i v_{x_i} + \sum\limits_{i,j = 1}^{n_x} \Sigma_{ij} v_{x_j x_i} + \rd_\text{jump} v \big) \rp \; \rd \vx= \sum\limits_{i,j = 1}^{n_x} \int\limits_{\Rb^{n_x}} \bigg[ -\frac{\partial}{\partial x_i}(F_i \rp) v + \frac{1}{2}  \frac{\partial^2}{\partial x_i x_j} (\Sigma_{ij} \rp) v \bigg] \rd \vx \notag \\
	&+ \sum\limits_{j = 1}^{n_p} \int\limits_{\Rb^{n_x}} \bigg[ \int\limits_{D_{Q_j}} \bigg( \rp(t,\vx - \eta_{j})  |I - \eta_{j \vx}| - \rp(t,\vx) \bigg) (p_{Q_j} \lambda_j)(t,q_j;t,\vx - \veta_j(t,\vx,q_j)) \bigg) \rd q_j \bigg] v(\vx) \rd \vx. \label{PF1E7}
	\end{align}
	\noindent Equations \eqref{PF1E2}, \eqref{PF1E7} and definition of forward Chapman-Kolmogorov operator in subsection \ref{subsec:Problem_Formulation_Prelminaries} imply \small
	\begin{align}
	&\frac{\partial}{\partial t} u(t,\vx) = \int\limits_{\Rb^{n_x}} \bigg[\sum\limits_{i,j = 1}^{n_x}  \bigg( -\frac{\partial}{\partial x_i}(F_i \rp) + \frac{1}{2}  \frac{\partial^2}{\partial x_i x_j} (\Sigma_{ij} \rp) \notag \\
	&+ \sum\limits_{j = 1}^{n_p} \int\limits_{D_{Q_j}} \Big( \rp(\vx - \eta_{j}(t,\vx))  |I - \eta_{j \vx}(t, \vx)| - \rp(t,\vx) \Big) (p_{Q_j} \lambda_j)(t,q_j;t,\vx - \veta_j(t,\vx,q_j)) \rd q_j \bigg] v(\vx) \rd \vx \notag \\
	&= \int\limits_{\Rb^{n_x}} \mathcal{F}^{\vu}_\text{\tiny MJD} \; \rp(t,\vx) \; v(\vx) \rd \vx= \int_{\Rb^{n_x}} \frac{\partial}{\partial t} \rp(t,\vx) v(\vx)  \rd \vx. \label{PF1E8}
	\end{align} \normalsize
	\noindent Using the calculus of variations argument (pp 201, proof of Theorem 7.5) \cite{Hanson07appliedstochastic}, \cite{Kirk1970} since the function $v$ is any arbitrary function with assumed boundedness and smoothness properties, $\rp(t,\vx)$ satisfies equation \eqref{Forward_CKPDE} in the weak sense.
	\noindent Notice that the Dynkin formula \eqref{PF1E1} and equation \eqref{PF1E8} imply \small
	\begin{align}
	\frac{\partial}{\partial t} u(t,\vx) = \bigg< \mathcal{F}^{\dagger \; \vu}_\text{\tiny MJD} v (\vx), \rp(t,\vx) \bigg> = \bigg< \mathcal{F}^{\vu}_\text{\tiny MJD} \; \rp(t,\vx), v(\vx) \bigg>
	\end{align} \normalsize
	\noindent which means that the backward operator $\mathcal{F}^{\dagger \; \vu}_\text{\tiny MJD}$  is the formal adjoint operator of the forward operator $\mathcal{F}^{\vu}_\text{\tiny MJD}$. \\
	\noindent The delta initial condition to be proved is well known in the case that the jump term is absent, that is for the diffusion processes. However Poisson process $\vP_t$ undergoes jumps which causes $\vx_t$ to have discontinuous paths. However, considering that  simple jump process has Poisson distribution, we see that a jump is unlikely in a small time interval $\rd t$ from $\Pb(\rd \vP_t = 0) = \exp^{-\lambda(t) \rd t} \approxeq 1$ as $\rd t \rightarrow 0$ proving equation \eqref{PDF_canbe_delta}.
\end{proof}

	
\begin{proof} \textit{of theorem \ref{Thm2}} 	\normalfont We use the process of liberation as in \cite{Lanczos1961} to liberate $\rp$ and obtain $\mathcal{F}^{\dagger \; \vu}_{\text{ \tiny MJD}}$. More precisely we start with the term $\pi (t,\vx) \mathcal{F}^{\vu}_{\text{\tiny MJD}} \rp (t,\vx)$ and manipulate it as follows
	\begin{align}
	\pi& (t,\vx) \mathcal{F}^{\vu}_{\text{\tiny MJD}} \rp (t,\vx) = \sum\limits_{i=1}^{n_x} \left( -\pi(\vx,t) \frac{\partial }{\partial x_i} ( F_i(t,\vx,\vu) \rp(t,\vx)) \right) + \frac{1}{2} \sum\limits_{i,j=1}^{n_x} \pi(t,\vx) \frac{\partial^2}{\partial x_i  \partial x_j}([\vSigma(t,\vx,\vu)]_{ij} \rp(\vx,t))  \notag
	\end{align}
	
	\begin{align}
	&+ \pi(t,\vx) \bigg(\sum\limits_{j = 1}^{n_p} \int\limits_{D_{Q_j}} \Big( \rp(\vx - \eta_{j}(t,\vx))  |I - \eta_{j \vx}(t, \vx)| - \rp(t,\vx) \Big) (p_{Q_j} \lambda_j)(t,q_j;t,\vx - \veta_j(t,\vx,q_j)) \rd q_j \bigg) \notag
	\end{align}
	 
	\begin{align}
	=& \sum\limits_{i = 1}^{n_x} \Big[ -\frac{\partial }{\partial x_i} \bigg( \pi(\vx,t) F_i(t,\vx,\vu) \rp(t,\vx)\bigg) + \rp(t,\vx) F_i(t,\vx,\vu(t)) \frac{\partial \pi(t,\vx)}{\partial x_i} \Big]\notag
	\end{align}
	\vspace{-3mm}
	\begin{align}
	&+ \frac{1}{2}   \sum\limits_{i,j = 1}^{n_x}   \bigg[ \frac{\partial}{\partial x_i} \Big( \pi(t,\vx)  \frac{\partial }{\partial x_j}([\vSigma(t,\vx,\vu)]_{ij} \rp(t,\vx) ) \Big) - \frac{\partial}{\partial x_i} \Big( [\vSigma(t,\vx,\vu)]_{ij} \rp(t,\vx) \frac{\partial \pi (t,\vx) }{\partial x_j}  \Big)
	\end{align}
	
	\begin{align}
	&{ +  \rp(t,\vx)  [\vSigma(t,\vx,\vu)]_{ij} \frac{\partial^2 \pi(t,\vx)}{\partial x_i \partial x_j}  \bigg] - \sum\limits_{j = 1}^{n_p} \int\limits_{D_{Q_j}} \pi(t,\vx) \rp(t,\vx)(p_{Q_j} \lambda_j)(t,q_j;t,\vx - \veta_j(t,\vx,q_j)) } \rd q_j \notag \\
	&+ \sum\limits_{j = 1}^{n_p} \int\limits_{D_{Q_j}} \pi(t,\vx) \rp(t,\vx - \veta_j) |I - \veta_{j \vx}| (p_{Q_j} \lambda_j)(t,q_j;t,\vx - \veta_j(t,\vx,q_j)) \rd q_j. \label{4.3}
	\end{align}	
	\noindent Integrating equation \eqref{4.3} over $\Rb^{n_x}$ and using (T4) gives us
	\begin{align}
	&\bigg<\pi, \mathcal{F}^{\vu}_{\text{\tiny MJD}} \rp \bigg> = \int\limits_{\Rb^{n_x}}  \bigg(  \sum\limits_{i=1}^{n_x}    \rp(t,\vx) F_i(t,\vx,\vu) \frac{\partial \pi}{\partial x_i} + \frac{1}{2} \sum\limits_{i,j = 1}^{n_x} \rp(t,\vx) [\vSigma(t,\vx,\vu)]_{ij} \frac{\partial^2 \pi}{\partial x_i    \partial x_j} \notag \\
	& + \sum\limits_{j = 1}^{n_p} \int\limits_{D_{Q_j}} \pi(t,\vx) \bigg((\rp(t,\vx - \veta_j)|I - \veta_{j \vx}|  - \rp(t,\vx))  (p_{Q_j} \lambda_j)(t,q_j;t,\vx - \veta_j(t,\vx,q_j)) \rd q_j   \bigg) \bigg)\rd\vx. \label{4.7}
	\end{align}
	\noindent Considering the terms in the last summation of this equation, we change the dummy variable of integration from $\vx$ to $\vxi_j$ in the first step. Then we choose $\vxi_j = \vx + \vh_{j}(t,\vx,q_j) = \vx + \veta_j(t,\vxi_j,q_j)$ in the second step where $\vh_{j}$ is assumed to be invertible w.r.t. $\vxi_j$ and change the variable of integration to $\vx$ using the Change of Variables Theorem \cite{JeffreysBook_1988}. This inverse mapping exists since we assume $\vh_{j}$ to be a bijection from $\Rb^{n_x}$ to $\Rb^{n_x}$. Noting that transformed domain of integration is again $\Rb^{n_x}$, we have that for all $1 \leq j \leq n_p$
	\begin{align}
	&\int\limits_{\Rb^{n_x}} \int\limits_{D_{Q_j}} \pi(t,\vx) \rp(t,\vx - \veta_j(t,\vx,q_j))  |I - \veta_{j \vx}(t,\vx,q_j)| (p_{Q_j} \lambda_j)(t,q_j;t,\vx - \veta_j(t,\vx,q_j)) \rd q_j \rd \vx \notag \\
	&= \int\limits_{\Rb^{n_x}} \int\limits_{D_{Q_j}} \pi(t,\vxi_j) \rp(t,\vxi_j - \veta_j(t,\vxi_j,q_j)) |I - \veta_{j \vxi_j}(t,\vxi_j,q_j)| (p_{Q_j} \lambda_j)(t,q_j;t,\vxi_j - \veta_j(t,\vxi_j,q_j)) \rd q_j \rd \vx \notag \\
	&= \int\limits_{\Rb^{n_x}} \int\limits_{D_{Q_j}} \rp(t,\vx) \; \pi(t,\vx + \vh_{j}(t,\vx,q_j)) (p_{Q_j} \lambda_j)(t,q_j;t,\vx) \rd q_j \rd\vx. \label{4.9}
	\end{align}	
	\noindent  So that equations \eqref{4.7}, \eqref{4.9} complete the proof as follows
	\begin{align}
	&\bigg<\pi,\mathcal{F}^\vu_{\text{\tiny MJD}} \rp \bigg> = \int\limits_{\Rb^{n_x}} \Bigg[ \sum\limits_{i = 1}^{n_x} F_i(t,\vx,\vu) \frac{\partial \pi(t,\vx)}{\partial x_i} + \frac{1}{2}  \sum\limits_{i,j = 1}^{n_x} [\vSigma(t,\vx,\vu)]_{ij} \frac{\partial^2 \pi(t,\vx)}{\partial x_i \partial x_j}  \notag \\
	&+ \sum\limits_{j = 1}^{n_p} \int\limits_{D_{Q_j}} \bigg(\pi(t,\vx + \vh_{j}(t,\vx,q_j)) - \pi(t,\vx) \bigg) (p_{Q_j} \lambda_j)(t,q_j;t,\vx) \rd q_j \Bigg] \rp(t,\vx)  \; \rd\vx = \bigg<\rp,\mathcal{F}^{\dagger \; \vu}_{\text{MJD}} \pi\bigg>. \label{4.10}
	\end{align}	
	
\end{proof}

\section*{Acknowledgements}

This research was supported under the ARO W911NF-16-1-0390 award.

\small
\bibliographystyle{ieeetr}
\bibliography{References}

\end{document}